\newtheorem {theorem}{Theorem}[section]
\newtheorem {lemma}{Lemma}[section]
\newtheorem*{Claim}{Claim}
\newtheorem {example}{Example}[section]
\newtheorem {definition}{Definition}[section]
\newtheorem {remark}{Remark}[section]
\def\ar{a\kern-.370em\raise.16ex\hbox{\char95\kern-0.53ex\char'47}\kern.05em}
\def\ees{{\accent"5E e}\kern-.385em\raise.2ex\hbox{\char'23}\kern-.08em}
\def\eex{{\accent"5E e}\kern-.470em\raise.3ex\hbox{\char'176}}
\def\AR{A\kern-.46em\raise.80ex\hbox{\char95\kern-0.53ex\char'47}\kern.13em}
\def\EES{{\accent"5E E}\kern-.5em\raise.8ex\hbox{\char'23 }}
\def\EEX{{\accent"5E E}\kern-.60em\raise.9ex\hbox{\char'176}\kern.1em}
\def\ow{o\kern-.42em\raise.82ex\hbox{
  \vrule width .12em height .0ex depth .075ex \kern-0.16em \char'56}\kern-.07em}
\def\OW{O\kern-.460em\raise1.36ex\hbox{
\vrule width .13em height .0ex depth .075ex \kern-0.16em \char'56}\kern-.07em}
\def\UW{U\kern-.42em\raise1.36ex\hbox{
\vrule width .13em height .0ex depth .075ex \kern-0.16em \char'56}\kern-.07em}
\def\DD{D\kern-.7em\raise0.4ex\hbox{\char '55}\kern.33em}
\title{Optimality conditions for minimizers at infinity in polynomial programming}
\author{TI\EES N-S\OW N PH\d{A}M}
\address{Department of Mathematics, University of Dalat, 1 Phu Dong Thien Vuong, Dalat, Vietnam}
\email{sonpt@dlu.edu.vn}
\thanks{The author was partially supported by Vietnam National Foundation for Science and Technology Development (NAFOSTED), grant 101.04-2016.05}
\subjclass{90C46~$\cdot$~90C26~$\cdot$~ 90C30}
\keywords{Existence of minimizers, Fermat theorem, Frank--Wolfe theorem, Fritz-John optimality conditions, Karush--Kuhn--Tucker optimality conditions, Mangasarian--Fromovitz constraint qualification, Newton polyhedron, polynomial programming}
\date{ \today}
\begin{document}
\maketitle

\begin{abstract}
In this paper we study necessary optimality conditions for the optimization problem
$$\textrm{infimum}f_0(x) \quad \textrm{ subject to } \quad x \in S,$$
where $f_0 \colon \mathbb{R}^n \rightarrow  \mathbb{R}$ is a polynomial function and $S \subset \mathbb{R}^n$ is a set defined by polynomial inequalities.
Assume that the problem is bounded below and has the Mangasarian--Fromovitz property at infinity. 
We first show that if the problem does {\em not} have an optimal solution, then a version at infinity of the Fritz-John optimality conditions holds. From this we derive a version at infinity of the Karush--Kuhn--Tucker optimality conditions. As applications, we obtain a Frank--Wolfe type theorem which states that 
the optimal solution set of the problem is nonempty provided the objective function $f_0$ is convenient.
Finally, in the unconstrained case, we show that the optimal value of the problem is the smallest critical value of some polynomial.
All the results are presented in terms of the Newton polyhedra of the polynomials defining the problem.
\end{abstract}

\section{Introduction}

Optimality conditions form the foundations of mathematical programming both theoretically and computationally. There is a large literature on all aspects of optimality conditions. We refer the reader to the classical papers \cite{John1948,  Karush1939, Kuhn1951, Mangasarian1967} and to the comprehensive monographs \cite{Bertsekas1999, Clarke1990, Lasserre2015, Mordukhovich2006} with the references therein.

In this paper, we are interested in necessary optimality conditions to polynomial optimization problems {\em whose solution sets are empty.} More precisely, let $f_0, f_{1}, \ldots, f_{p} \colon \mathbb{R}^n \rightarrow  \mathbb{R}$ be polynomial functions and set
$$S := \{x \in \mathbb{R}^n \ : \ f_1(x) \le 0, \ldots, f_p(x) \le 0\}.$$
Assume that $S \ne \emptyset $ and the restriction of $f_0$ on $S$ is bounded from below. Consider the optimization problem
\begin{equation}\label{PT0}
f^* := \inf f_0(x) \quad \textrm{ such that } \quad x \in S. \tag{P}
\end{equation}
We first assume that $f_0$ attains its infimum on $S,$ i.e., $f^* = f_0(x^*)$ for some $x^* \in S.$ Thanks to the Fritz-John necessary optimality conditions \cite{John1948}, there exist nonnegative real numbers $\lambda^*_i, i = 0, 1, \ldots, p,$ not all zero, such that
\begin{eqnarray*}
&&  \lambda^*_0 \nabla  f_0({x}^*)  + \sum_{i = 1}^{p} \lambda^*_i \nabla f_i({x}^*) = 0, \\
&& \lambda^*_i f_i({x}^*) = 0 \quad \textrm{ for } i = 1, \ldots, p.
\end{eqnarray*}
Here and in the following, $\nabla  f_i(x)$ denotes the gradient vector of $f_i$ at $x.$ Furthermore, if the Mangasarian--Fromovitz constraint qualification \cite{Mangasarian1967} holds at $x^*$: there exists a vector $v \in \mathbb{R}^n$ such that 
$$\langle \nabla f_{i}(x^*) , v \rangle < 0 \quad \textrm{ for all \ $i \ge 1 $ \ with } \quad f_{i}(x^*) = 0,$$
then we may obtain the more informative optimality conditions due to Karush \cite{Karush1939}, Kuhn and Tucker \cite{Kuhn1951} where the real number $\lambda^*_0$ can be taken to be $1.$
Note that the Mangasarian--Fromovitz constraint qualification is generally satisfied; see the very recent paper by Bolte, Hochart, and Pauwels \cite{Bolte2017}.

We now assume that $f_0$ does not attain its optimal value $f^*$ on $S.$ As far as we know, there are no results which are similar to the two necessary optimality conditions mentioned above. The purpose of this paper is to fill this gap for polynomial programs. Indeed, under the assumption that the considered problem has the Mangasarian--Fromovitz property at infinity (see Definition~\ref{Definition2} in the next section), we will show that either $f^*= 0$ or there exist a nonempty set $J \subset \{1, \ldots, n\},$ a point $x^* \in \mathbb{R}^n,$ and scalars $\lambda^*_0, \lambda^*_1, \ldots, \lambda^*_p$ satisfying the following conditions:
\begin{enumerate}
\item [(i)]   $x^*_j = 0$ if, and only if, $j \not \in J;$
\item [(ii)]   $f^* = f_{0, \Delta_0}(x^*);$ 
\item [(iii)]  $\lambda^*_0 \nabla f_{0, \Delta_0}(x^*)   + \sum_{i = 1}^p \lambda^*_i \nabla f_{i, \Delta_i}(x^*)  = 0;$
\item [(iv)] $f_{i, \Delta_i}(x^*) \le 0$ and $\lambda^*_i f_{i, \Delta_i}(x^*)  = 0$ for $i = 1, \ldots, p;$
\item [(v)] the numbers $\lambda^*_i, i = 0, 1, \ldots, p,$ are nonnegative and not all zero;
\end{enumerate}
where $f_{i, \Delta_i}, i = 0 , \ldots, p,$ are polynomials which correspond to some faces $\Delta_i$ of the Newton polyhedra at infinity of $f_i.$  Moreover, if 
the following constraint qualification holds: there exists a vector $v \in \mathbb{R}^n$ such that 
$$\langle \nabla f_{i, \Delta_i}(x^*) , v \rangle < 0 \quad \textrm{ for all \ $i \ge 1$ \ with } \ f_{i, \Delta_i}(x^*) = 0 \ \textrm{ and } \ f_i|_{\mathbb{R}^J} \ne \mathrm{constant}.$$
then we can take $\lambda^*_0 = 1.$ 

In view of these results, we can say that $x^*$ is a ``minimizer at infinity'' of the problem~\eqref{PT0}.

Next, we study the existence of optimal solutions to polynomial optimization problems. It is well-known that for linear programming, a bounded feasible problem always has an optimal solution. This property is remarkable, and fails to hold for general nonlinear programs. Frank and Wolfe \cite{Frank1956} showed that when the objective function is quadratic and the feasible region is linear, the set of optimal solutions is nonempty provided the problem is bounded below. Many  other  authors  generalized  the  Frank--Wolfe  theorem to  broader  classes  of  functions. For example, Perold \cite{Perold1980} extended  the Frank--Wolfe theorem to a class of non-quadratic objective functions and linear constraints. Andronov,
Belousov, and Shironin \cite{Andronov1982} generalized  the Frank--Wolfe theorem to the case of a cubic polynomial objective function  under linear constraints. Luo and Zhang \cite{Luo1999} (see also \cite{Terlaky1985}) extended the Frank--Wolfe theorem to various classes of general convex or non-convex quadratic constraint systems. Belousov  and  Klatte  in  \cite{Belousov2002} (see also \cite{Bank1988, Belousov1977, Bertsekas2007, Obuchowska2006}) generalized the result on attainability to convex polynomial programs. Very recently, \DD inh, H\`a and the author \cite{Dinh2014-2} extended the Frank--Wolfe theorem for non-degenerate polynomial programs.

As a corollary of our theorem~\ref{Fritz-JohnTypeTheorem}, we readily establish the attainability of the infimum (assumed to be finite) of the problem~\eqref{PT0} when the objective function $f_0$ is convenient and the considered problem has the Mangasarian--Fromovitz property at infinity; this improves the main result in the paper~\cite{Dinh2014-2}. 

Finally, in the unconstrained case (i.e., $S = \mathbb{R}^n),$ we show that 
the optimal value of the problem~\eqref{PT0} is the smallest critical value of some polynomial.
This property is useful because, using Theorem~9 in the paper of Nie, Demmel, and Sturmfels~\cite{Nie2006}, it allows us to construct a sequence of semidefinite programming (SDP) relaxations whose optimal values converge monotonically, increasing to the optimal value of the original problem.

All the obtained results are presented in terms of the Newton polyhedra of $f_0$ and the polynomials defining $S.$ These results, together with those in  \cite{Dinh2014-2, Doat2014, HaHV2013, HaHV2017, Tomas2015}), show that many interesting properties in polynomial programming can be obtained from the geometry of Newton polyhedra.

The paper is structured as follows. 
In Section~\ref{Preliminary}, we recall some notations, definitions and preliminary facts which are used throughout this paper. 
Optimality conditions and the Frank--Wolfe theorem for polynomial programs are given in Section~\ref{Results}.
Further properties for the unconstrained case are given in Section~\ref{UnconstrainedCase}.

\section{Preliminaries} \label{Preliminary}

Throughout this paper, $\mathbb{R}^n$ denotes the Euclidean space of dimension $n.$ The corresponding inner product (resp., norm) in $\mathbb{R}^n$  is defined by $\langle x, y \rangle$ for any $x, y \in \mathbb{R}^n$ (resp., $\| x \| := \sqrt{\langle x, x \rangle}$ for any $x \in \mathbb{R}^n$). Given a nonempty set $J \subset \{1, \ldots, n\},$ we define
$$\mathbb{R}^J := \{x \in \mathbb{R}^n \ : \ x_j = 0, \textrm{ for all } j \not \in J\}.$$
We denote by $\mathbb{Z}_+$ the set of non-negative integer numbers. If $\kappa = (\kappa_1, \ldots, \kappa_n) \in \mathbb{Z}_+^n,$ we denote by $x^\kappa$ the monomial $x_1^{\kappa_1} \cdots x_n^{\kappa_n}.$

\subsection{Newton polyhedra and non-degeneracy conditions}

Let $f \colon \mathbb{R}^n \to \mathbb{R}$ be a polynomial function. Suppose that $f$ is written as $f = \sum_{\kappa} a_\kappa x^\kappa.$ Then
the support of $f,$ denoted by $\mathrm{supp}(f),$ is defined as the set of those $\kappa  \in \mathbb{Z}_+^n$ such that $a_\kappa \ne 0.$ 
The {\em Newton polyhedron (at infinity)}  of $f$, denoted by $\Gamma(f),$ is defined as the convex hull in $\mathbb{R}^n$ of the set $\mathrm{supp}(f).$\footnote{Note that we do not include the origin in the definition of the Newton polyhedron at infinity $\Gamma(f).$}
The polynomial $f$ is said to be {\em convenient} if $\Gamma(f)$ intersects each coordinate axis in a point different from the origin $0$ in $\mathbb{R}^n,$ that is, if for any $j \in \{1, \ldots, n\}$ there exists some $\kappa_j > 0$ such that $\kappa_j e^j \in \Gamma(f),$ where $\{e^1, \ldots, e^n\}$ denotes the canonical basis in $\mathbb{R}^n.$
For each (closed) face $\Delta$ of $\Gamma(f),$  we will denote by $f_\Delta$ the polynomial $\sum_{\kappa \in \Delta} a_\kappa x^\kappa;$ if $\Delta \cap \mathrm{supp}(f) = \emptyset$ we let $f_\Delta := 0.$

Given a nonzero vector $q \in \mathbb{R}^n,$ we define
\begin{eqnarray*}
d(q, \Gamma(f)) &:=& \min \{\langle q, \kappa \rangle \ : \ \kappa \in \Gamma(f)\}, \\
\Delta(q, \Gamma(f)) &:=& \{\kappa \in \Gamma(f) \ : \ \langle q, \kappa \rangle = d(q, \Gamma(f)) \}.
\end{eqnarray*}
By definition, for each nonzero vector $q \in \mathbb{R}^n,$ $\Delta(q, \Gamma(f)) $ is a closed face of $\Gamma(f).$ Conversely, if $\Delta$ is a closed face of $\Gamma(f)$ then there exists a nonzero vector\footnote{Since $\Gamma(f)$ is an integer polyhedron, we can assume that all the coordinates of $q$ are rational numbers.} $q \in \mathbb{R}^n$ such that $\Delta = \Delta(q, \Gamma(f)).$ The {\em Newton boundary (at infinity)} of $f$, denoted by $\Gamma_{\infty}(f),$ is defined as the union of all closed  faces
$\Delta(q, \Gamma(f))$ for some $q \in \mathbb{R}^n$ with $d(q, \Gamma(f))  < 0$ (and so $\min_{j = 1, \ldots, n}q_j < 0).$

Following N\'emethi and Zaharia \cite{Nemethi1990} we say that a closed face $\Delta$ of $\Gamma(f)$ is {\em  bad} if the following two conditions hold:
\begin{enumerate}
  \item [(i)] the affine subvariety of dimension = $\dim (\Delta)$ spanned by $\Delta$ contains the origin, and
  \item [(ii)] there exists a hyperplane $H \subset \mathbb{R}^n$ with equation $q_1\kappa_1 + q_2 \kappa_2 + \cdots + q_n \kappa_n = 0,$
where $\kappa_1, \kappa_2, \ldots, \kappa_n$ are the coordinates in $\mathbb{R}^n,$ such that:
\begin{enumerate}
  \item [(ii$_a$)] there exist $i$ and $j$ with $q_i \cdot q_j < 0,$ and
  \item [(ii$_b$)] $H \cap \Gamma(f) = \Delta.$
\end{enumerate}
\end{enumerate}
More geometrically, the condition (ii$_a$) says that the hyperplane $H$ intersects the interior of the positive octant of ${\Bbb R}^n.$ 

\begin{remark}\label{Remark21} {\rm
The following statements follow immediately from definitions:

(i) For each nonempty subset $J$ of $\{1, \ldots, n\},$ if the restriction of $f$ on $\mathbb{R}^J$ is not constant, then $\Gamma(f) \cap \mathbb{R}^J = \Gamma(f|_{\mathbb{R}^J}).$

(ii) If $f$ is convenient, then there is no bad face in $\Gamma(f)$ and, moreover,  $\Gamma(f) \cap \mathbb{R}^J = \Gamma(f|_{\mathbb{R}^J})$  for all nonempty subset $J$ of $\{1, \ldots, n\}.$

(iii) Let $\Delta := \Delta(q, \Gamma(f))$ for some nonzero vector $q := (q_1, \ldots, q_n) \in \mathbb{R}^n.$ By definition, $f_\Delta = \sum_{\kappa \in \Delta} a_\kappa x^\kappa$ is a weighted homogeneous polynomial of type $(q, d := d(q, \Gamma(f))),$ i.e., we have for all $t > 0$ and all $x \in \mathbb{R}^n,$
$$f_\Delta(t^{q_1} x_1,  \ldots, t^{q_n} x_n) = t^d f_\Delta(x_1, \ldots, x_n).$$
This implies the Euler relation
\begin{equation}\label{PT1}
\sum_{j = 1}^n q_j x_j \frac{\partial f_\Delta}{\partial x_j}(x) = d f_\Delta(x).
\end{equation}
In particular, if $d \ne 0$ and $\nabla f_\Delta(x) = 0,$ then $f_\Delta(x) = 0.$
}\end{remark}

The following notion  (see \cite{Kouchnirenko1976}) will play an important role in Section~\ref{UnconstrainedCase}. 
\begin{definition}\label{Definition1} {\rm
We say that $f$ is {\em (Kouchnirenko) nondegenerate at infinity} if, and only if, for all faces $\Delta \in  \Gamma_{\infty} (f),$ the system of equations
$$\frac{\partial f_{ \Delta }}{\partial x_1}(x)  =  \cdots =  \frac{\partial f_{ \Delta }}{\partial x_n}(x) = 0$$
has no solution in $(\mathbb{R} \setminus \{0\})^n.$
}\end{definition}

\begin{remark}{\rm
It is worth emphasizing that  the condition of non-degeneracy at infinity is a generic property in the sense that it holds in an open and dense semialgebraic set
of the entire space of input data (see, for example, \cite{HaHV2017}).
}\end{remark}

The following definition is inspired from the work of Mangasarian and Fromovitz~\cite{Mangasarian1967}.

\begin{definition}\label{Definition2} {\rm
Let $f_0, f_1, \ldots, f_p \colon \mathbb{R}^n \rightarrow \mathbb{R}$ be polynomial functions and set
$$S := \{x \in \mathbb{R}^n \ : \ f_1(x) \le 0, \ldots, f_p(x) \le 0\}.$$
We say that the {\em Mangasarian--Fromovitz property at infinity} ($\mathrm{(MF)}_\infty$ for short)  holds for the problem $\inf_{x \in S} f_0(x)$  if, and only if, 
for every nonempty set $J \subset \{1, \ldots, n\},$ for every vector $q \in \mathbb{R}^n,$ and for every $x \in \mathbb{R}^n$ satisfying the conditions
\begin{eqnarray*}
&\mathrm{(i)}& d(q, \Gamma(f_0)) < 0, \\
&\mathrm{(ii)}& \Delta_i := \Delta(q, \Gamma(f_i)) \subset \mathbb{R}^J \ \textrm{ for all } \ i \in  I := \{i \in \{0, 1, \ldots, p\} \ : \ f_i|_{\mathbb{R}^J} \ne \mathrm{constant}\}, \\
&\mathrm{(iii)}&  x_j = 0 \ \Longleftrightarrow \ j \not \in J,  \\
&\mathrm{(iv)}& f_{i, \Delta_i} (x) \le 0 \quad \textrm{ for } \quad  i = 1, \ldots, p,
\end{eqnarray*}
there exists a nonzero vector $v \in \mathbb{R}^n$ such that
\begin{eqnarray*}
\langle \nabla  f_{i, \Delta_i} (x), v \rangle  <  0 \quad \textrm{ for all \ $i \in I $ \ with } \ f_{i, \Delta_i} (x)  = 0.
\end{eqnarray*}
}\end{definition}

Note that the $\mathrm{(MF)}_\infty$ property in the above definition is not a constraint qualification since it involves the objective function $f_0.$ 

As shown in the next lemma, Definitions~\ref{Definition1}~and~\ref{Definition2} are equivalent in the unconstrained case.
\begin{lemma}\label{Lemma21}
Let $f_0 \colon \mathbb{R}^n \rightarrow \mathbb{R}$ be a polynomial function. Then $f_0$ is nondegenerate at infinity if, and only if, 
the problem $\inf_{x \in \mathbb{R}^n} f_0(x)$ has the $\mathrm{(MF)}_\infty$ property.
\end{lemma}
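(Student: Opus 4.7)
The plan is to unpack Definition~\ref{Definition2} in the unconstrained case ($p = 0$) and compare it directly against Definition~\ref{Definition1}. With no inequality constraints, condition (iv) is vacuous and $I \subseteq \{0\}$: if $f_0|_{\mathbb{R}^J}$ is constant then $I = \emptyset$ and the conclusion of $\mathrm{(MF)}_\infty$ holds trivially, while if $f_0|_{\mathbb{R}^J}$ is non-constant then $I = \{0\}$ and the conclusion either holds trivially (if $f_{0, \Delta_0}(x) \ne 0$, any $v \ne 0$ works) or reduces to the existence of $v$ with $\langle \nabla f_{0, \Delta_0}(x), v \rangle < 0$ (if $f_{0, \Delta_0}(x) = 0$, take $v := -\nabla f_{0, \Delta_0}(x)$, which works provided the gradient is nonzero). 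Since $d := d(q, \Gamma(f_0)) < 0$, the Euler relation~\eqref{PT1} shows that $\nabla f_{0, \Delta_0}(x) = 0$ automatically forces $f_{0, \Delta_0}(x) = 0$. Hence $\mathrm{(MF)}_\infty$ fails if and only if there exist a nonempty $J \subset \{1, \ldots, n\}$, a nonzero $q \in \mathbb{R}^n$, and $x \in \mathbb{R}^n$ such that (i) $d(q, \Gamma(f_0)) < 0$, (ii) $\Delta_0 := \Delta(q, \Gamma(f_0)) \subset \mathbb{R}^J$ with $f_0|_{\mathbb{R}^J}$ non-constant, (iii) $x_j = 0 \iff j \notin J$, and $\nabla f_{0, \Delta_0}(x) = 0$.

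For the direction ``nondegenerate $\Rightarrow \mathrm{(MF)}_\infty$'', I suppose for contradiction that the failure data $(J, q, x, \Delta_0)$ above exists. Since $\Delta_0 \subset \mathbb{R}^J$, the polynomial $f_{0, \Delta_0}$ depends only on the variables $\{x_j : j \in J\}$, so $\partial f_{0, \Delta_0}/\partial x_j \equiv 0$ for $j \notin J$. Define $\widetilde{x} \in \mathbb{R}^n$ by $\widetilde{x}_j := x_j$ for $j \in J$ and $\widetilde{x}_j := 1$ for $j \notin J$. Then $\widetilde{x} \in (\mathbb{R} \setminus \{0\})^n$ and $\nabla f_{0, \Delta_0}(\widetilde{x}) = 0$. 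Since $d(q, \Gamma(f_0)) < 0$, the face $\Delta_0$ belongs to $\Gamma_\infty(f_0)$, contradicting the nondegeneracy of $f_0$.

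For the converse, assume $f_0$ is degenerate at infinity, so there exist $\Delta \in \Gamma_\infty(f_0)$ and $x \in (\mathbb{R} \setminus \{0\})^n$ with $\nabla f_{0, \Delta}(x) = 0$. Choose $q$ with $d := d(q, \Gamma(f_0)) < 0$ and $\Delta = \Delta(q, \Gamma(f_0))$, and set $J := \{1, \ldots, n\}$. Then conditions (i)--(iv) of Definition~\ref{Definition2} are satisfied ($\Delta \subset \mathbb{R}^n$ is automatic, and $f_0$ is non-constant because $\Gamma_\infty(f_0) \ne \emptyset$). The Euler relation forces $f_{0, \Delta}(x) = 0$, so $0 \in I$ is an active index whose gradient vanishes, and no $v$ can satisfy $\langle \nabla f_{0, \Delta}(x), v \rangle < 0$, contradicting $\mathrm{(MF)}_\infty$.

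I expect the main subtlety to be the bookkeeping in the first paragraph, specifically the apparent mismatch between the two definitions---one requiring $x \in (\mathbb{R} \setminus \{0\})^n$, the other only $x_j \ne 0$ for $j \in J$---which is bridged in the forward direction by padding with nonzero entries outside $J$ (exploiting that $f_{0, \Delta_0}$ is independent of those variables) and in the reverse direction by taking the trivial choice $J = \{1, \ldots, n\}$; the Euler relation is the indispensable algebraic tool both for reducing $\mathrm{(MF)}_\infty$ failure to a single gradient condition and for forcing $f_{0, \Delta}(x) = 0$ in the converse.
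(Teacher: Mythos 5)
Your proof is correct and follows essentially the same route as the paper's: both directions hinge on the Euler relation forcing $f_{0,\Delta_0}(x)=0$ at critical points when $d<0$, and on the fact that $f_{0,\Delta_0}$ is independent of the variables outside $J$, which lets you pass between $(\mathbb{R}\setminus\{0\})^n$ and points supported on $J$. The only cosmetic differences are that you argue the forward direction by contraposition (padding $x$ with $1$'s) where the paper exhibits the vector $v$ explicitly, and in the converse you take $J=\{1,\dots,n\}$ (making conditions (ii) and (iii) automatic) where the paper takes the minimal $J$ containing $\Delta_0$ and verifies that $f_0|_{\mathbb{R}^J}$ is nonconstant.
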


\begin{proof}
$\Rightarrow$. Take arbitrary a nonempty set $J \subset \{1, \ldots, n\},$ a nonzero vector $q \in \mathbb{R}^n,$ and a point $x^0 \in \mathbb{R}^n$ such that the following conditions hold:
\begin{enumerate}
\item[(a)] $d_0 := d(q, \Gamma(f_0)) < 0,$
\item[(b)]  $\Delta_0 := \Delta(q, \Gamma(f_0)) \subset \mathbb{R}^J, \quad f_0|_{\mathbb{R}^J} \ne \mathrm{constant}, $
\item[(c)] $x^0_j = 0 \ \Longleftrightarrow \ j \not \in J, $
\item[(d)] $f_{0, \Delta_0} (x^0) = 0.$
\end{enumerate}
By definition, $\Delta_0 \in \Gamma_\infty(f_0)$ and $f_{0, \Delta_0}$ does not depend on $x_j$ for $j \not \in J.$ The non-degeneracy assumption implies that there exists $j^* \in J$ such that $\frac{\partial f_{0, \Delta_0}}{\partial x_{j^*}}(x^0) \ne 0.$ Let $v := (v_1, \ldots, v_n) \in \mathbb{R}^n$ with
$$v_j := \begin{cases}
- \left(\frac{\partial f_{0, \Delta_0}}{\partial x_{j^*}}(x^0) \right)^{-1} & \textrm{ if } j  = j^*,\\
0 & \textrm{ otherwise.}
\end{cases}$$
We have $\langle \nabla  f_{0, \Delta_0} (x^0), v \rangle  = - 1 <  0,$ and so the problem $\inf_{x \in \mathbb{R}^n} f_0(x)$ has the $\mathrm{(MF)}_\infty$ property.

$\Leftarrow$. By contradiction, assume that there exist $\Delta_0 \in \Gamma_\infty(f_0)$ and $x^0 \in (\mathbb{R} \setminus \{0\})^n$ such that $\nabla  f_{0, \Delta_0} (x^0) = 0.$ 
By definition, there exists a vector $q \in \mathbb{R}^n$ such that $\Delta_0 = \Delta(q, \Gamma(f_0))$ and $d_0 := d(q, \Gamma(f_0)) < 0.$
Remark~\ref{Remark21}(iii) now leads to $f_{0, \Delta_0} (x^0) = 0.$ Let $J$ be the smallest subset of $\{1, \ldots, n\}$ such that the space $\mathbb{R}^J$ contains $\Delta_0.$ We have for all $\kappa \in \Delta_0 \subset \mathbb{R}^J,$
$$\sum_{j \in J} q_j \kappa_j \ = \ \sum_{j = 1}^n q_j \kappa_j \ = \ d_0 \ < \ 0.$$
It turns out that $\Gamma(f_0)  \cap \mathbb{R}^J$ is nonempty and different from $\{0\}.$ Consequently, $f_0|_{\mathbb{R}^J}$ is not constant. 
Let $y^0 := (y^0_1, \ldots, y^0_n) \in \mathbb{R}^n,$ where
$$y^0_j := \begin{cases}
x^0_j & \textrm{ if } j  \in J, \\
0 & \textrm{ otherwise.}
\end{cases}$$
Since $f_{0, \Delta_0}$ does not depend on $x_j$ for $j \not \in J,$ we get
$$f_{0, \Delta_0} (y^0) = f_{0, \Delta_0} (x^0) = 0 \quad \textrm{ and } \quad 
\nabla  f_{0, \Delta_0} (y^0) = \nabla  f_{0, \Delta_0} (x^0) = 0.$$
Combining these facts with the (MF)$_{\infty}$ property, we obtain the absurd conclusion:
\begin{eqnarray*}
0 &=& \langle \nabla  f_{0, \Delta_0} (y^0), v \rangle  \ < \   0
\end{eqnarray*}
for some vector $v \in \mathbb{R}^n.$ 
\end{proof}

\subsection{Semi-algebraic geometry}
This subsection contains some background material on semi-algebraic geometry and preliminary results which will be used later. We give only concise definitions and results that will be needed in the paper. For more detailed information on the subject, see, for example, \cite{Bochnak1998} and \cite[Chapter~1]{HaHV2017}.

\begin{definition}{\rm
\begin{enumerate}
  \item[(i)] A subset of $\mathbb{R}^n$ is called {\em semi-algebraic} if it is a finite union of sets of the form
$$\{x \in \mathbb{R}^n \ | \ f_i(x) = 0, i = 1, \ldots, k; f_i(x) > 0, i = k + 1, \ldots, p\}$$
where all $f_{i}$ are polynomials.
 \item[(ii)] Let $A \subset \mathbb{R}^n$ and $B \subset \mathbb{R}^m$ be semi-algebraic sets. A map $F \colon A \to B$ is said to be {\em semi-algebraic} if its graph
$$\{(x, y) \in A \times B \ | \ y = F(x)\}$$
is a semi-algebraic subset in $\mathbb{R}^n\times \mathbb{R}^m.$
\end{enumerate}
}\end{definition}

The class of semi-algebraic sets is closed under taking finite intersections, finite unions and complements; a Cartesian product of semi-algebraic sets is a semi-algebraic set. Moreover, a major fact concerning the class of semi-algebraic sets is its stability under linear projections; in particular, the closure and the interior of a semi-algebraic set are semi-algebraic sets.

In the sequel, we will need the following useful results (see, for example, \cite[Chapter~1]{HaHV2017}).

\begin{lemma}[Curve Selection Lemma at infinity]\label{CurveSelectionLemma}
Let $A\subset \mathbb{R}^n$ be a semi-algebraic set, and let $f := (f_1, \ldots,f_p) \colon  \mathbb{R}^n \to \mathbb{R}^p$ be a semi-algebraic map. Assume that there exists a sequence $\{x^\ell\}$ such that $x^\ell \in A$, $\lim_{l \to \infty} \| x^\ell  \| = \infty$ and $\lim_{l \to \infty} f(x^\ell)  = y \in(\overline{\mathbb{R}})^p,$ where $\overline{\mathbb{R}} := \mathbb{R} \cup \{\pm \infty\}.$ Then there exists a smooth semi-algebraic curve $\varphi \colon (0, \epsilon)\to \mathbb{R}^n$ such that $\varphi(t) \in A$ for all $t \in (0, \epsilon), \lim_{t \to 0} \|\varphi(t)\| = \infty,$ and $\lim_{t \to 0} f(\varphi(t)) = y.$
\end{lemma}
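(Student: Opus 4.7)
The plan is to reduce this to the classical (local) curve selection lemma by two semialgebraic compactifications: an inversion $\sigma \colon \mathbb{R}^n \setminus \{0\} \to \mathbb{R}^n \setminus \{0\}$ defined by $\sigma(x) := x/\|x\|^2$, which sends sequences tending to infinity to sequences tending to the origin, and a componentwise compactification $G \colon \overline{\mathbb{R}}^p \to [-1,1]^p$ whose coordinate maps are $t \mapsto t/\sqrt{1+t^2}$ (with $\pm\infty \mapsto \pm 1$), a semialgebraic homeomorphism whose inverse $s \mapsto s/\sqrt{1-s^2}$ (with $\pm 1 \mapsto \pm\infty$) is also continuous.

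After discarding finitely many terms so that $x^\ell \ne 0$ for all $\ell$, I would introduce the semialgebraic set
\begin{equation*}
B \ := \ \left\{ \bigl(\sigma(x),\, G(f(x))\bigr) \in \mathbb{R}^n \times [-1,1]^p \ : \ x \in A,\ x \ne 0 \right\}.
\end{equation*}
Setting $u^\ell := \sigma(x^\ell)$ and $w^\ell := G(f(x^\ell))$, one has $\|u^\ell\| = 1/\|x^\ell\| \to 0$ and, by continuity of $G$ on $\overline{\mathbb{R}}^p$, $w^\ell \to G(y)$. Hence $(u^\ell, w^\ell) \in B$ and $(u^\ell, w^\ell) \to (0, G(y))$, so $(0, G(y))$ lies in the closure of the semialgebraic set $B$.

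Next I would apply the classical curve selection lemma to $B$ at the point $(0, G(y)) \in \overline{B}$, obtaining a smooth semialgebraic curve $\gamma(t) = (u(t), w(t))$, defined on some interval $(0, \epsilon)$, with $\gamma(t) \in B$ and $\lim_{t \to 0^+} \gamma(t) = (0, G(y))$. Every point of $B$ has nonzero first component because $\sigma$ never vanishes, so $u(t) \ne 0$ throughout $(0, \epsilon)$; therefore $\varphi(t) := \sigma(u(t)) = u(t)/\|u(t)\|^2$ is a well-defined smooth semialgebraic map on $(0, \epsilon)$. Using $\sigma \circ \sigma = \mathrm{id}$ on $\mathbb{R}^n \setminus \{0\}$ and the defining form of $B$, there is an $x(t) \in A$ with $u(t) = \sigma(x(t))$ and $\varphi(t) = \sigma(\sigma(x(t))) = x(t) \in A$; moreover $\|\varphi(t)\| = 1/\|u(t)\| \to \infty$ and $G(f(\varphi(t))) = w(t) \to G(y)$, whence $f(\varphi(t)) \to y$ by applying $G^{-1}$ componentwise.

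The only real obstacle is to guarantee that $u(t) \ne 0$ on the whole interval $(0, \epsilon)$, so that the inversion $\varphi = \sigma \circ u$ is defined and genuinely lies in $A$; this is precisely why I compactify the image of $f$ rather than working with $f$ directly, since packaging the target into $[-1,1]^p$ lets me fold both the ``$\|x^\ell\| \to \infty$'' and ``$f(x^\ell) \to y$'' behaviors into a single application of the classical curve selection lemma at one finite point $(0, G(y))$, avoiding any separate argument for the coordinates $y_j = \pm\infty$.
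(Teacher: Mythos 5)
The paper does not prove this lemma; it is quoted as a known fact with a pointer to \cite{HaHV2017} (see also Némethi--Zaharia), so there is no in-paper argument to compare against. Your proof is correct and is essentially the standard derivation used in those sources: compactify the domain at infinity by the inversion $\sigma(x)=x/\|x\|^2$, compactify the target $\overline{\mathbb{R}}^p$ by a semialgebraic homeomorphism onto $[-1,1]^p$, observe that the resulting set $B$ is semialgebraic by Tarski--Seidenberg and that $(0,G(y))$ lies in $\overline{B}\setminus B$ (it cannot lie in $B$ since $\sigma$ never vanishes), and apply the classical local Curve Selection Lemma there. The recovery step is clean because $\sigma$ is an involution, so the point $x(t)\in A$ with $\sigma(x(t))=u(t)$ is forced to be $\sigma(u(t))=\varphi(t)$, and the limits $\|\varphi(t)\|\to\infty$ and $f(\varphi(t))\to y$ follow from $u(t)\to 0$, $u(t)\neq 0$, and continuity of $G^{-1}$ on $[-1,1]^p$ (including at the boundary points corresponding to infinite coordinates of $y$). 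The only points you leave implicit are routine: discarding finitely many $x^\ell$ so the inversion applies, and shrinking $\epsilon$ so that the semialgebraic arc produced by the local lemma is genuinely $C^\infty$ on $(0,\epsilon)$ (a semialgebraic curve is smooth off a finite set, e.g.\ by the Monotonicity Lemma). No gaps.
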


\begin{lemma}[Growth Dichotomy Lemma] \label{GrowthDichotomyLemma}
Let $f \colon (0, \epsilon) \rightarrow \mathbb{R}$ be a semi-algebraic function with $f(t) \ne 0$ for all $t \in (0, \epsilon).$ Then
there exist constants $c \ne 0$ and $q \in \mathbb{Q}$ such that $f(t) = ct^q + o(t^q)$ as $t \to 0^+.$
\end{lemma}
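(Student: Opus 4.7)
The plan is to deduce the statement from the classical Puiseux expansion of one-variable algebraic functions. Since $f$ is semi-algebraic, its graph
$$G := \{(t, f(t)) \in \mathbb{R}^2 \ : \ t \in (0, \epsilon)\}$$
is a semi-algebraic subset of $\mathbb{R}^2$ and, as such, is contained in the zero set of some nonzero polynomial $P(t,y) \in \mathbb{R}[t,y];$ that is, $P(t, f(t)) \equiv 0$ on $(0, \epsilon).$ After shrinking $\epsilon$ if necessary, the Monotonicity Theorem (a standard consequence of cell decomposition for semi-algebraic subsets of $\mathbb{R}$) allows me to assume that $f$ is analytic and monotone on $(0, \epsilon),$ and that $G$ coincides with a single analytic branch of the algebraic curve $\{P = 0\}$ at the origin.

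Next, viewing $P$ as a polynomial in $y$ with coefficients in $\mathbb{R}[t],$ I would apply Puiseux's theorem to obtain a positive integer $N,$ an integer $k_0$ (possibly negative, to accommodate the case $|f(t)| \to \infty$ as $t \to 0^+$), and real coefficients $a_k$ with $a_{k_0} \ne 0,$ such that the fractional power series
$$\varphi(t) \ := \ \sum_{k \ge k_0} a_k t^{k/N}$$
converges for $0 < t \ll 1,$ satisfies $P(t, \varphi(t)) \equiv 0,$ and parametrises the analytic branch of $\{P = 0\}$ containing $G$ near the origin. By uniqueness of such a branch, $f(t) = \varphi(t)$ on some subinterval $(0, \epsilon'),$ and isolating the leading term gives
$$f(t) \ = \ a_{k_0} t^{k_0/N} + o(t^{k_0/N}) \quad \textrm{as } t \to 0^+.$$
Setting $c := a_{k_0} \ne 0$ and $q := k_0/N \in \mathbb{Q}$ yields the desired expansion; the hypothesis that $f$ does not vanish on $(0, \epsilon)$ rules out the degenerate case $\varphi \equiv 0,$ which would force all coefficients $a_k$ to be zero.

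The main obstacle is not the power-series manipulation but rather the justification that the Puiseux expansion is available in this semi-algebraic setting. Once one knows that the graph of a one-variable semi-algebraic function lies on an algebraic curve and becomes analytic after a small shrinking of the domain, the rest is the classical Newton-polygon construction applied to $P(t,y),$ which produces the exponent $k_0/N$ as a slope of the Newton polygon of $P$ at $(0,0)$ and determines $a_{k_0}$ from the corresponding initial equation. This semi-algebraic-to-algebraic reduction, rather than the formal algebra of Puiseux series, is where care is needed, and it is a standard result one may quote directly from the references on semi-algebraic geometry cited earlier.
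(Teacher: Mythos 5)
The paper does not prove this lemma at all: it is stated as a standard preliminary fact and dispatched with a citation to the semi-algebraic geometry literature (\cite[Chapter~1]{HaHV2017}; see also \cite{Bochnak1998}), so there is no in-paper argument to compare against. Judged on its own terms, your reconstruction is essentially the standard proof and is sound. The two load-bearing steps are both correct and both standard: (1) the graph of $f$ is a semi-algebraic subset of $\mathbb{R}^2$ of dimension at most $1$, hence its Zariski closure is a proper algebraic subset, giving a nonzero $P(t,y)$ with $P(t,f(t))\equiv 0$ (note you should observe that $P$ must have positive degree in $y$, since otherwise $P(t)=0$ on an interval would force $P\equiv 0$); and (2) over a punctured neighbourhood of $t=0$ the real curve $\{P=0\}$ splits into finitely many disjoint Puiseux branches, and the continuous function $f$ must follow exactly one of them, which is where continuity (rather than the analyticity or monotonicity you invoke) does the actual work. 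Two points deserve slightly more care than you give them: the unbounded case is cleanest if you reduce to the bounded case by applying the bounded statement to $1/f$ or $f/(1+f^2)$ rather than asserting that Puiseux's theorem directly produces negative exponents; and the hypothesis $f(t)\ne 0$ is used not merely to rule out $\varphi\equiv 0$ but, more precisely, to guarantee (after the semi-algebraic shrinking of $\epsilon$) that the branch followed by $f$ is not the zero branch, so that some coefficient is nonzero and a leading term exists. Neither point is a gap, only a place where the write-up leans on ``standard'' a little harder than it needs to.
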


\begin{lemma}[Monotonicity Lemma] \label{MonotonicityLemma}
Let $a < b$ in $\mathbb{R}.$ If $f \colon [a, b] \rightarrow \mathbb{R}$ is a semi-algebraic function, then there is a partition $a =: t_1 < \cdots < t_{N} := b$ of $[a, b]$ such that $f|_{(t_l, t_{l + 1})}$ is $C^1,$ and either constant or strictly monotone, for $l \in \{1, \ldots, N - 1\}.$
\end{lemma}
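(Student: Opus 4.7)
The plan is to derive the conclusion from two standard facts in semi-algebraic geometry: (a) any semi-algebraic subset of $\mathbb{R}$ is a finite union of points and open intervals, and (b) any semi-algebraic map admits a cell decomposition compatible with its graph. Using these, I would first produce a finite partition of $[a, b]$ on each open piece of which $f$ is $C^1$, and then refine that partition further to separate constant behaviour from strict monotonicity.

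For the piecewise $C^1$ step, the graph $G := \{(x, f(x)) : x \in [a, b]\} \subset \mathbb{R}^2$ is semi-algebraic by the very definition of a semi-algebraic function. Applying the Cell Decomposition Theorem (see \cite[Chapter~1]{HaHV2017}) to $[a, b] \times \mathbb{R}$ compatibly with $G$ produces a finite partition of $[a, b]$ into points and open intervals $I_1, \ldots, I_m$ such that over each $I_l$ the slice $G \cap (I_l \times \mathbb{R})$ is a disjoint union of graphs of $C^1$ semi-algebraic functions $I_l \to \mathbb{R}$. Since $G$ is itself single-valued, exactly one such cell lies above each $I_l$, which forces $f|_{I_l}$ to be $C^1$.

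For the second step, the derivative $f'$ is itself semi-algebraic on each such $I_l$: its graph is definable from that of $f$ by a first-order formula, and semi-algebraic sets are preserved under the associated Tarski--Seidenberg operations. Consequently $Z_l := \{x \in I_l : f'(x) = 0\}$ is a semi-algebraic subset of $\mathbb{R}$, hence by (a) a finite union of points and open intervals. Refining $I_l$ along $Z_l$ gives open sub-intervals on which either $f' \equiv 0$, so $f$ is constant, or $f'$ is nowhere zero; in the latter case continuity of $f'$ together with the intermediate value theorem forces $f'$ to have constant sign on the sub-interval, so that $f$ is strictly monotone. Merging the refinements of the various $I_l$ and relabelling the resulting breakpoints in increasing order produces the required sequence $a =: t_1 < \cdots < t_N := b$.

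The main obstacle to a fully self-contained argument is the Cell Decomposition Theorem invoked in the first step: it is a substantive structural result whose usual proof runs by a simultaneous induction on the ambient dimension together with a piecewise triviality statement. For the present purposes I would take it as a black box from the references cited, exactly as the excerpt does when recording the Curve Selection, Growth Dichotomy, and Monotonicity Lemmas consecutively without proof.
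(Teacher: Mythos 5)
The paper does not prove this lemma at all: it is recorded, together with the Curve Selection Lemma at infinity and the Growth Dichotomy Lemma, as a standard preliminary fact with a pointer to \cite[Chapter~1]{HaHV2017}, so there is no in-paper argument to compare yours against step by step. Your sketch is the standard proof and is sound. The two points that need care are both handled correctly: the semi-algebraicity of $f'$ on each open piece does follow from the Tarski--Seidenberg principle applied to the first-order formula defining the derivative from the graph of $f$, and the finiteness structure of $Z_l = \{x \in I_l : f'(x) = 0\}$ (a finite union of points and open intervals) does let you refine the partition so that each resulting open subinterval either lies in the interior of $Z_l$ (whence $f' \equiv 0$ and $f$ is constant) or misses $Z_l$ entirely (whence $f'$ has constant sign by continuity and the intermediate value theorem, and $f$ is strictly monotone). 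One caveat worth flagging: in the o-minimal literature the Monotonicity Theorem is normally proved \emph{before} cell decomposition and is used as an ingredient in its proof, so deducing it from cell decomposition inverts the usual logical order; this is not circular in the purely semi-algebraic setting, where cylindrical algebraic decomposition can be established independently (see \cite{Bochnak1998}), but a more self-contained route is to use that $f$ satisfies a nontrivial polynomial relation $P(x, f(x)) \equiv 0$ and to place the breakpoints at the finitely many zeros of the discriminant and leading coefficient of $P$. Either way, your argument reaches the stated conclusion.
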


\section{The constrained case} \label{Results}

From now on we let $f_0, f_{1}, \ldots, f_{p} \colon \mathbb{R}^n \rightarrow  \mathbb{R}$ be nonconstant polynomial functions and set
$$S := \{x \in \mathbb{R}^n \ : \ f_1(x) \le 0, \ldots, f_p(x) \le 0\}.$$
We will assume that $S \ne \emptyset$ and $f_0$ is bounded from below on $S.$ Consider the problem~\eqref{PT0} formulated in the introduction section:
\begin{equation*}
f^* := \inf f_0(x) \quad \textrm{ such that } \quad x \in S. \tag{P}
\end{equation*}
The main result of this paper is the following theorem, which is a version at infinity of the Fritz-John and Karush--Kuhn--Tucker optimality conditions.

\begin{theorem}[Optimality conditions for minimizers at infinity] \label{Fritz-JohnTypeTheorem}
Assume that the $\mathrm{(MF)}_\infty$ property holds for the problem~\eqref{PT0}. If $f_0$ does not attain its infimum $f^*$ on $S,$ then 
either $f^* = 0$ or there exist a nonempty set $J \subset \{1, \ldots, n\},$ a vector $q \in \mathbb{R}^n$ with $\min_{j \in J} q_j < 0,$ a point $x^* \in \mathbb{R}^n,$ and scalars $\lambda^*_0, \lambda^*_1, \ldots, \lambda^*_p$ such that the following conditions hold:
\begin{enumerate}
\item [(i)]   $x^*_j = 0$ if, and only if, $j \not \in J;$
\item [(ii)]   $f^* = f_{0, \Delta_0}(x^*),$ $\Delta_0 := \Delta(q, \Gamma(f_0))  \subset \mathbb{R}^J,$ and $d(q, \Gamma(f_0)) = 0;$
\item [(iii)]  $\lambda^*_0 \nabla f_{0, \Delta_0}(x^*)   + \sum_{i = 1}^p \lambda^*_i \nabla f_{i, \Delta_i}(x^*)  = 0;$
\item [(iv)] $f_{i, \Delta_i}(x^*) \le 0$ and $\lambda^*_i f_{i, \Delta_i}(x^*)  = 0$ for $i = 1, \ldots, p;$
\item [(v)] the numbers $\lambda^*_i, i = 0, 1, \ldots, p,$ are nonnegative and not all zero;
\end{enumerate}
where $\Delta_i := \Delta(q, \Gamma(f_i )).$ Moreover, we can take $\lambda^*_0 = 1$ provided the following constraint qualification holds: there exists a nonzero vector $v \in \mathbb{R}^n$ such that 
$$\langle \nabla f_{i, \Delta_i}(x^*) , v \rangle < 0 \quad \textrm{ for all \ $i \ge 1$ \ with } \ f_{i, \Delta_i}(x^*) = 0 \ \textrm{ and } \ f_i|_{\mathbb{R}^J} \ne \mathrm{constant}.$$
\end{theorem}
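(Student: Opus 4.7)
The plan is to realize the desired ``minimizer at infinity'' as the leading-order data of a semi-algebraic minimizing curve equipped with Fritz-John multipliers, then pass to the limit along the curve. Assume $f^* \ne 0$, otherwise there is nothing to prove. For each $R > 0$ the compact sub-problem
\[
\min\{f_0(x) \ : \ x \in S, \ \|x\|^2 \le R^2\}
\]
attains its minimum at some $y^R$; since $f^*$ is not attained on $S$, one has $\|y^R\| = R \to \infty$ and $f_0(y^R) \searrow f^*$. The classical Fritz-John theorem at $y^R$ yields nonnegative multipliers $(\lambda_0^R, \ldots, \lambda_p^R, \mu^R)$, not all zero, normalized to lie on the standard simplex, with
\[
\lambda_0^R \nabla f_0(y^R) + \sum_{i=1}^p \lambda_i^R \nabla f_i(y^R) + 2\mu^R y^R = 0, \qquad \lambda_i^R f_i(y^R) = 0.
\]
The Fritz-John system being semi-algebraic in $(y, \lambda, \mu, R)$, Lemma~\ref{CurveSelectionLemma} supplies a smooth semi-algebraic curve $t \mapsto (\varphi(t), \lambda_i(t), \mu(t))$ on some $(0, \epsilon)$ with $\|\varphi(t)\| \to \infty$, $f_0(\varphi(t)) \to f^*$, and satisfying these equations at every $t$.

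Applying Lemma~\ref{GrowthDichotomyLemma} coordinatewise to $\varphi$, I set $J := \{j \ : \ \varphi_j \not\equiv 0\}$ and expand $\varphi_j(t) = c_j t^{q_j} + o(t^{q_j})$ with $c_j \ne 0$ and $q_j \in \mathbb{Q}$ for $j \in J$; put $c_j := 0$ for $j \notin J$, $x^* := (c_1, \ldots, c_n)$. The unboundedness $\|\varphi(t)\| \to \infty$ forces $\min_{j \in J} q_j < 0$. Extend $q$ to $\mathbb{R}^n$ by sufficiently large positive rationals in the coordinates $j \notin J$ so that $\Delta_i := \Delta(q, \Gamma(f_i)) \subset \mathbb{R}^J$ for every $i$ with $f_i|_{\mathbb{R}^J}$ nonconstant. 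Direct substitution then yields
\[
f_i(\varphi(t)) = f_{i, \Delta_i}(x^*)\, t^{d_i} + o(t^{d_i}), \qquad \partial_j f_i(\varphi(t)) = \partial_j f_{i, \Delta_i}(x^*)\, t^{d_i - q_j} + o(t^{d_i - q_j})
\]
for $j \in J$, with $d_i := d(q, \Gamma(f_i))$. Since $f_0(\varphi(t)) \to f^* \ne 0$, one must have $d_0 = 0$ and $f_{0, \Delta_0}(x^*) = f^*$, which is condition (ii); condition (i) holds by construction. Feasibility $f_i(\varphi(t)) \le 0$ and complementary slackness $\lambda_i^R f_i(y^R) = 0$ pass to the limit to give (iv). Applying Lemma~\ref{GrowthDichotomyLemma} to each multiplier $\lambda_i(t)$ and $\mu(t)$ and renormalizing so the leading orders align produces nonnegative limits $(\lambda_0^*, \ldots, \lambda_p^*, \mu^*)$ that are not all zero.

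Multiplying the $j$-th component of the Fritz-John equation by $t^{q_j}$ (so each $t^{q_j} \partial_j f_i(\varphi(t))$ has clean leading order $t^{d_i}$) and passing to the limit extracts an identity of the form
\[
\lambda_0^* \nabla f_{0, \Delta_0}(x^*) + \sum_{i=1}^p \lambda_i^* \nabla f_{i, \Delta_i}(x^*) + 2 \mu^* \widetilde y = 0
\]
for a residual vector $\widetilde y$ coming from the penalty $2\mu^R y^R$. The heart of the proof is to show that $2\mu^* \widetilde y$ cancels, which is where $\mathrm{(MF)}_\infty$ is used: a slight perturbation $\tilde q = q - \varepsilon \kappa_0$ with $\kappa_0 \in \Delta_0$ and $\varepsilon > 0$ small puts us in the regime $d(\tilde q, \Gamma(f_0)) < 0$ where $\mathrm{(MF)}_\infty$ directly applies, producing a vector $v$ with $\langle \nabla f_{i, \Delta_i}(x^*), v \rangle < 0$ on every active index $i \in I$; pairing the displayed identity with $v$ and using $\lambda_i^* f_{i, \Delta_i}(x^*) = 0$ rules out $\mu^* \widetilde y \ne 0$, yielding (iii) and completing (v). For the KKT version, assume $\lambda^*_0 = 0$; then $\sum_{i \ge 1} \lambda_i^* \nabla f_{i, \Delta_i}(x^*) = 0$ with not-all-zero nonnegative multipliers supported on indices where $f_{i, \Delta_i}(x^*) = 0$ and $f_i|_{\mathbb{R}^J}$ is nonconstant, and pairing with the constraint qualification vector $v$ gives $0 = \sum \lambda_i^* \langle \nabla f_{i, \Delta_i}(x^*), v \rangle < 0$, a contradiction; hence $\lambda^*_0 > 0$ and may be rescaled to $1$. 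The principal obstacle throughout is the elimination of the penalty residue $\mu^* \widetilde y$: both the coordinatewise exponent bookkeeping in the Fritz-John limit and the perturbation of $q$ that makes $\mathrm{(MF)}_\infty$ applicable while preserving the faces $\Delta_i$ of the other constraints demand careful combinatorial control over the Newton polyhedra $\Gamma(f_i)$.
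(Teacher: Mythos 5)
Your skeleton (compactified subproblems, Fritz--John multipliers, the Curve Selection Lemma at infinity, Puiseux expansions of $\varphi$ and of the multipliers, extraction of a leading-order identity, and the final contradiction for the KKT part) is the same as the paper's, but the two steps you yourself identify as the heart of the matter are handled incorrectly. First, the inference ``since $f_0(\varphi(t))\to f^*\ne 0$, one must have $d_0=0$'' is false: $f^*\ne 0$ only excludes $d_0>0$. If $d_0<0$, the expansion $f_0(\varphi(t))=f_{0,\Delta_0}(x^*)t^{d_0}+o(t^{d_0})$ merely forces $f_{0,\Delta_0}(x^*)=0$, and the finite limit $f^*$ is then produced by lower-order terms, so both halves of condition (ii) fail. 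Excluding $d_0<0$ is precisely where the paper invokes $\mathrm{(MF)}_\infty$ (its only use in the Fritz--John part): when $d_0<0$, hypothesis (i) of Definition~\ref{Definition2} holds for the actual $q$, the vanishing $f_{0,\Delta_0}(x^*)=0$ makes the index $0$ active, and pairing the already-established identity (iii) with the resulting vector $v$ gives $0<0$. Your proposal never performs this step, so the hypothesis $\mathrm{(MF)}_\infty$ is not used where it is actually needed.

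Second, your mechanism for cancelling the penalty residue $2\mu^*\widetilde y$ does not work. Perturbing to $\tilde q=q-\varepsilon\kappa_0$ does make $d(\tilde q,\Gamma(f_0))<0$, but it replaces each $\Delta_i=\Delta(q,\Gamma(f_i))$ by a generally proper subface $\Delta(\tilde q,\Gamma(f_i))$ (equality would require $\langle\kappa_0,\cdot\rangle$ to be constant on $\Delta_i$), so $\mathrm{(MF)}_\infty$ only yields a vector $v$ adapted to the wrong face polynomials; and even granting a suitable $v$, pairing the displayed identity with it gives nothing because $\langle\widetilde y,v\rangle$ has no sign. The paper's cancellation is of a completely different nature and uses no constraint qualification: writing $\mu(t)=\mu^0t^{\delta}+\cdots$, the gradient equation forces $\ell-q_j\le\delta+q_j$ for all $j\in J$, and on the set $J'$ where equality holds the Euler relation~\eqref{PT1} for the weighted-homogeneous polynomials $f_{i,\Delta_i}$, combined with $d_i f_{i,\Delta_i}(x^*)=0$, yields $0=-\tfrac{\ell-\delta}{2}\mu^0\sum_{j\in J'}(x^*_j)^2\ne 0$, whence $J'=\emptyset$ and the $\mu$-term is strictly subleading. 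Two further points you gloss over: you must rule out the limit multiplier being carried entirely by $\mu^*$ (the paper does this via the identity $\tfrac{\mu}{2}\tfrac{d}{dt}\|\varphi\|^2=-\lambda_0\tfrac{d}{dt}(f_0\circ\varphi)$, together with the auxiliary constraints $x_k=0$ for $k\notin J$ and their multipliers, which you omit); and for indices $i$ with $f_i|_{\mathbb{R}^J}$ constant the expansion $f_i(\varphi(t))=f_{i,\Delta_i}(x^*)t^{d_i}+\cdots$ is unavailable, so condition (iv) for those $i$ requires the separate observation $f_{i,\Delta_i}(x^*)=f_i(x^*)=f_i(x^\ell)\le 0$.
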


\begin{proof}
Since $f_0$ does not attain its infimum $f^*$ on $S,$ there exists a sequence $\{a^{\ell}\}_{\ell \ge 1} \subset S$ such that
$$\lim_{\ell \to \infty} \|a^\ell\| = + \infty \quad \textrm{ and } \quad \lim_{\ell \to \infty} f_0(a^\ell) = f^*.$$
For each $\ell \ge 1,$ we consider the problem
\begin{eqnarray*}
&& \textrm{minimize} \qquad f_0(x) \\
&& \textrm{subject to } \quad f_i(x) \le 0, \ i = 1, \ldots, p, \quad \|x\|^2 = \|a^\ell\|^2.
\end{eqnarray*}
Since the objective function $f_0$ is continuous and the constraint set is compact, by the Weierstrass theorem, an optimal solution $x^\ell \in S$ of the problem exists. We have
$$\|x^\ell\|^2 = \|a^\ell\|^2 \quad \textrm{ and } \quad f^* \le f_0(x^\ell) \le f_0(a^\ell).$$
Hence,
$$\lim_{\ell \to \infty} \|x^\ell\| = + \infty \quad \textrm{ and } \quad \lim_{\ell \to \infty} f_0(x^\ell) = f^*.$$
Since the number of all subsets of the set $\{1, \ldots, n\}$ is finite, by passing to a subsequence if necessary,  we may assume that $\{j \in \{1, \ldots, n\} \ : \ x^\ell_j \ne 0 \} = J$  for all  $\ell $ and for some nonempty set $J \subset \{1, \ldots, n\}.$ 

Let $I_1$ be the (possibly empty) set of all indices $i \in \{1, \ldots, p\}$ such that the restriction of $f_i$ on $\mathbb{R}^J = \{x \in \mathbb{R}^n \ : \ x_j = 0, j \not\in J\}$ is not constant. We have for all $i \in \{1, \ldots, p\} \setminus I_1,$
\begin{eqnarray} \label{PT2}
f_i(x) & = & f_i(x^\ell)  \ \le \ 0 \quad \textrm{ for all } x \in \mathbb{R}^J \textrm{ and all } \ \ell \ge 1.
\end{eqnarray}
Consequently, $x^\ell$ is also an optimal solution of the problem
\begin{eqnarray*}
&& \textrm{minimize} \qquad f_0(x) \\
&& \textrm{subject to } \quad f_i(x) \le 0, i \in I_1, \quad h_k(x) = 0, k \not\in J, \quad \|x\|^2 = \|a^\ell\|^2,
\end{eqnarray*}
here and in the following we let $h_k(x) := x_k.$ The Fritz-John optimality conditions (see, for example, \cite{Bertsekas1999}) imply that there exist some real numbers $\lambda^\ell_i$ for $i \in I_1 \cup \{0\},$ $\nu^\ell_k$ for $k \not \in J,$ and $\mu^\ell$ such that the following relations hold:
\begin{eqnarray*}
&& \sum_{i \in I_1 \cup \{0\}} \lambda_i^\ell \nabla f_i(x^\ell) +  \sum_{k \not \in J}  \nu^\ell_k \nabla h_k(x^\ell) + \mu^\ell x^\ell = 0,\\
&&  \lambda^\ell_0 \ge 0, \ \lambda^\ell_i \ge 0 \ \textrm{ and } \ \lambda^\ell_i f_{i}(x^\ell)  = 0 \quad \textrm{ for } i \in I_1,\\
&&  \|(\lambda_i^\ell, \nu^\ell_k, \mu^\ell)_{i \in I_1 \cup \{0\}, k \not \in J} \| = 1.
\end{eqnarray*}

For simplicity, we write  $\lambda := (\lambda_i)_{i \in I_1 \cup \{0\}} \in \mathbb{R}^{\#I_1 + 1}$ and $\nu := (\nu_k)_{k \not \in J} \in \mathbb{R}^{n - \# J}.$ Let
\begin{eqnarray*}
\mathscr{A} := \big \{ (x, \lambda, \nu, \mu) & \in & \mathbb{R}^n \times \mathbb{R}^{\#I_1 + 1} \times \mathbb{R}^{n - \# J} \times \mathbb{R}
\  :  \ f_i(x) \le 0 \textrm{ for }  i \in I_1, \\
&& x_j \ne 0 \textrm{ for } j \in J, \ h_k(x) = 0 \textrm{ for } k \not \in J,\\
&& \sum_{i \in I_1 \cup \{0\}} \lambda_i \nabla f_i(x) + \sum_{k \not \in J}  \nu_k \nabla h_k(x) + \mu x = 0, \\
&& \lambda_i f_i(x) = 0 \textrm{ for } i \in I_1, \ \lambda_i \ge 0 \textrm{ for } i \in I_1 \cup \{0\}, \ \|(\lambda, \nu, \mu)\| = 1 \big \}.
\end{eqnarray*}
Then $\mathscr{A}$ is a semi-algebraic set and the sequence $(x^\ell, \lambda^\ell, \nu^\ell, \mu^\ell) \in \mathscr{A}$ tends to infinity as $\ell \to \infty.$ Applying Lemma ~\ref{CurveSelectionLemma} to the semi-algebraic function $\mathscr{A} \rightarrow \mathbb{R}, (x, \lambda, \nu, \mu)  \mapsto f_0(x),$ 
we get a smooth semi-algebraic curve 
$$(\varphi, \lambda, \nu, \mu) \colon (0, \epsilon) \rightarrow \mathbb{R}^n \times \mathbb{R}^{\#I_1 + 1} \times \mathbb{R}^{n - \# J} \times \mathbb{R}, \quad t \mapsto (\varphi(t), \lambda(t), \nu(t), \mu(t)),$$ 
satisfying the following conditions
\begin{enumerate}
\item [{(c1)}] $\lim_{t \to 0^+}  \|\varphi(t)\|  = +\infty;$ 
\item [{(c2)}] $\lim_{t \to 0^+}  f_0(\varphi(t)) = f^*;$
\item [{(c3)}] $f_i(\varphi(t)) \le 0$ for $i \in I_1;$
\item [{(c4)}] $J = \{ j \in \{1, \ldots, n\} \ : \ \varphi_j(t) \not \equiv 0\};$
\item [{(c5)}] $h_k(\varphi(t)) \equiv  0$ for $k \not \in J;$
\item [{(c6)}] $\sum_{i \in I_1 \cup \{0\}}  \lambda_i(t) \nabla f_i(\varphi(t)) + 
\sum_{k \not \in J}  \nu_k(t) \nabla h_k(\varphi(t)) + \mu(t) \varphi(t) \equiv 0;$
\item [{(c7)}]$\lambda_i(t) f_i(\varphi(t)) \equiv 0,$ for $i \in I_1;$
\item [{(c8)}]$\lambda_i(t) \ge 0$ for $i \in I_1 \cup \{0\};$
\item [{(c9)}] $\|(\lambda(t), \nu(t),  \mu(t))\| \equiv 1.$
\end{enumerate}

Since the (smooth) functions $\lambda_i$ and $f_i \circ \varphi$ are semi-algebraic, by shrinking $\epsilon$ if necessary, we can assume, that these functions are either constant or strictly monotone (see Lemma~\ref{MonotonicityLemma}). Then, by Condition~(c7), one can see for all $i \in I_1$ that either $\lambda_i(t) \equiv 0$ or $f_i\circ \varphi(t) \equiv  0.$ Consequently, we obtain
$$\lambda_i(t) \frac{d}{ dt }(f_i \circ \varphi)(t)  \equiv 0 \quad \textrm{ for } \quad i \in I_1.$$

Let $I_2 := \{i \in I_1 \ : \ f_i \circ \varphi (t) \equiv 0\}.$ We have $\lambda_i(t) \equiv 0$ for $i \in I_1 \setminus I_2$ because of Condition~(c7). It follows from Condition~(c6) that
\begin{eqnarray*}
\frac{\mu (t)}{2} \frac{d \|\varphi(t)\|^2}{dt}
&=& \mu (t) \left \langle \varphi(t), \frac{d \varphi(t)}{dt} \right \rangle \\
&=& - \sum_{i \in I_2 \cup \{0\}} \lambda_i(t) \left \langle \nabla  f_i(\varphi(t)), \frac{d \varphi(t)}{dt} \right \rangle 
- \sum_{k \not \in J} \nu_k(t) \left \langle \nabla  h_k(\varphi(t)), \frac{d \varphi(t)}{dt} \right \rangle \\
&=& - \sum_{i \in I_2 \cup \{0\} } \lambda_i(t) \frac{d}{dt}(f_i \circ \varphi)(t) - \sum_{k \not \in J} \nu_k(t) \frac{d}{dt}(h_k \circ \varphi)(t) \\
&=& - \lambda_0(t) \frac{d}{dt}(f_0 \circ \varphi)(t) - \sum_{k \not \in J} \nu_k(t) \frac{d}{dt}(h_k \circ \varphi)(t).
\end{eqnarray*}
From Condition (c5) one has $\frac{d}{dt}(h_k \circ \varphi)(t) = 0$ for all $k \not \in J$ and hence
\begin{eqnarray}\label{PT3}
\frac{\mu (t)}{2} \frac{d \|\varphi(t)\|^2}{dt} &=& - \lambda_0(t) \frac{d}{dt}(f_0 \circ \varphi)(t).
\end{eqnarray}
This, together with Condition~(c1), implies that if $\|(\lambda_i(t))_{i \in I_1 \cup\{0\}}\|  = 0$ then $\mu(t) = \lambda_0(t) = 0,$ and hence, by Condition (c6), 
$$\sum_{k \not \in J}  \nu_k(t) \nabla h_k(\varphi(t)) =  0.$$
Combining this with the definition of $h_k$ we see that $\nu_k(t) = 0$ for all $k \not \in J,$ which contradicts Condition~(c9).
Thus, $\|(\lambda_i(t))_{i \in I_1 \cup\{0\}}\|  \ne 0,$ and so, after a scaling, we can assume that 
\begin{eqnarray}\label{PT4}
\|(\lambda_i(t))_{i \in I_1 \cup \{0\}}\| & = & 1 \quad \textrm{ for all } \quad t \in (0, \epsilon).
\end{eqnarray}

From Condition~(c4) we have $\varphi_j \not\equiv 0$ for all $j \in J.$ By Lemma~\ref{GrowthDichotomyLemma}, for each $j \in J,$ we can expand the coordinate $\varphi_j$ as follows
$$\varphi_j(t) =  {x}^*_j t^{q_j} + \textrm{ higher order terms in } t,$$
where ${x}^*_j \ne 0$ and $q_j \in \mathbb{Q}.$ From Condition (c1), we get $\min_{j \in J} q_j  < 0.$

Let $q_j := M$ for $j \not \in J$ with
\begin{eqnarray*}
M & \gg & \max_{i = 0, 1, \ldots, p}\left  \{\sum_{j \in J} q_j \kappa_j \ : \ \kappa \in \Gamma(f_i) \right\}.
\end{eqnarray*}
For each $i \in \{0, 1, \ldots, p\},$ let $d_i$ be the minimal value of the linear function $\sum_{j = 1}^n q_j \kappa_j$ on $\Gamma(f_i)$ and let $\Delta_i$ be the maximal face of $\Gamma(f_i)$ (maximal with respect to the inclusion of faces)  where the linear function takes this value, i.e.,
\begin{eqnarray*}
d_i &:=& d(q, \Gamma(f_i)) \quad \textrm{ and } \quad \Delta_i \ := \ \Delta(q, \Gamma(f_i)).
\end{eqnarray*}

Recall that $\mathbb{R}^J := \{\kappa := (\kappa_1, \ldots, \kappa_n) \in \mathbb{R}^n \ : \ \kappa_j = 0 \textrm{ for } j \not \in J \}.$ 
Take any $i \in I_1 \cup \{0\}.$  Then the restriction of $f_i$ on $\mathbb{R}^J$ is not constant, and so 
$\Gamma(f_i) \cap {\mathbb{R}^J} = \Gamma(f_i|_{\mathbb{R}^J}) $ is nonempty and different from $\{0\}.$ Furthermore,  by definition of the vector $q,$ one has 
\begin{eqnarray} \label{PT5}
d_i &=& d(q, \Gamma(f_i|_{\mathbb{R}^J})) \quad \textrm{ and } \quad \Delta_i \ = \ \Delta(q, \Gamma(f_i|_{\mathbb{R}^J})) 
\ \subset \ {\mathbb{R}^J}.
\end{eqnarray}
Consequently, for each $j \not \in J,$ the polynomial $f_{i,\Delta_i}$ does not depend on the variable $x_j.$
Now suppose that $f_i$ is written as $f_i(x) = \sum_{\kappa} a_{i, \kappa} x^\kappa.$ Then
\begin{eqnarray*}
f_i(\varphi(t)) &=& \sum_{\kappa\in\Gamma(f_i) \cap \mathbb{R}^J} a_{i, \kappa} (\varphi(t))^\kappa\\
&=&\sum_{\kappa\in\Gamma(f_i) \cap \mathbb{R}^J}a_{i, \kappa} (\varphi_1(t))^{\kappa_1}\ldots(\varphi_n(t))^{\kappa_n}\\
&=&\sum_{\kappa\in\Gamma(f_i) \cap \mathbb{R}^J}\left(a_{i, \kappa} ({x}^*_1 t^{q_1})^{\kappa_1}\ldots({x}^*_nt^{q_n})^{\kappa_n}+\textrm{ higher order terms in } t\right)\\
&=&\sum_{\kappa\in\Gamma(f_i) \cap \mathbb{R}^J}\left(a_{i, \kappa} ({x}^*)^\kappa t^{\sum_{j \in J}q_j \kappa_j}+\textrm{ higher order terms in } t\right)\\
&=&\sum_{\kappa\in\Delta_i}a_{i, \kappa} ({x}^*)^\kappa t^{d_i}+\textrm{ higher order terms in } t,
\end{eqnarray*}
where ${x}^* := ({x}^*_1, \ldots, {x}^*_n)$ with ${x}^*_j := 0$ for $j \not \in J.$ By definition, $f_{i, \Delta_i} (x) = \sum_{\kappa \in \Delta_i} a_{i, \kappa} x^\kappa.$ Hence
\begin{eqnarray}\label{PT6}
f_i(\varphi(t)) &=& f_{i, \Delta_i}({x}^*)t^{d_i} + \textrm{ higher order terms in } t.
\end{eqnarray}
If $d_0 > 0,$ then it follows from Condition~(c2) that $f^* = 0$ and the theorem is proved.
So, in the rest of the proof, we assume that $d_0 \le 0.$ 
Observe that if $d_0 < 0$ then $f_{0, \Delta_0}({x}^*) = 0.$ Therefore,
\begin{eqnarray}\label{PT7}
d_0 \le 0 \quad \textrm{ and } \quad  d_0 f_{0, \Delta_0}({x}^*) = 0.
\end{eqnarray}
Furthermore, it follows from (c3), \eqref{PT6} and the definition of the sets $I_1, I_2$ that
\begin{equation}\label{PT8}
f_{i, \Delta_i}({x}^*) \le 0 \quad \textrm{ for all } \quad i \in I_1 \quad \textrm{ and } \quad 
f_{i, \Delta_i}({x}^*) = 0 \quad \textrm{ for all } \quad i \in I_2.
\end{equation}

Let $I_3 := \{i \in I_2 \cup \{0\} \ : \ \lambda_i \not \equiv 0 \}.$ Since $\lambda_i \equiv 0$ for all $i \in I_1 \setminus I_2,$ we obtain from \eqref{PT4} that $I_3 \ne \emptyset.$ For $i \in I_3,$ expand the coordinate $\lambda_i$ in terms of the parameter  (cf.~Lemma~\ref{GrowthDichotomyLemma}) as follows
$$\lambda_i(t) =  \lambda_i^0 t^{\theta_i} + \textrm{ higher order terms in } t,$$
where $\lambda_i^0 \ne 0$ and $\theta_i  \in \mathbb{Q}.$ By Condition~(c8), then $\lambda_i^0 > 0.$ Furthermore, from \eqref{PT4} one has $\theta_i \ge 0$ for all $i \in I_3$ with the equality occurring for some $i \in I_3.$ 

For $i \in I_3$ and $j \in J$, by some similar calculations as with $f_i(\varphi(t))$, we have 
\begin{eqnarray*}
\frac{\partial f_i}{\partial x_j}(\varphi(t))
&=& \frac{\partial f_{i, \Delta_i}}{\partial x_j}({x}^*)t^{d_i - q_j}  + \textrm{ higher order terms in } t.
\end{eqnarray*}
Since $h_k(x) = x_k$ for all $k \not \in J,$ it holds that
\begin{eqnarray*}
\frac{\partial h_k}{\partial x_j}(\varphi(t)) &\equiv& 0 \quad \textrm{ for all } \quad j \in J.
\end{eqnarray*}
Consequently, we have for all $j \in J,$
\begin{eqnarray}\label{PT9}
\sum_{i \in I_3} \lambda_i(t) \frac{\partial f_i}{\partial x_j}(\varphi(t)) &+&
\sum_{k \not \in J} \nu_k(t) \frac{\partial h_k}{\partial x_j}(\varphi(t))
\ = \  \sum_{i \in I_3} \lambda_i(t) \frac{\partial f_i}{\partial x_j}(\varphi(t)) \nonumber\\
&=& \sum_{i \in I_3} \left( \lambda_i^0  \frac{\partial f_{i, \Delta_i}}{\partial x_j}({x}^*)t^{d_i + \theta_i - q_j}  + \textrm{ higher order terms in } t \right) \nonumber \\
&=& \left( \sum_{i \in I_4} \lambda_i^0  \frac{\partial f_{i, \Delta_i}}{\partial x_j}({x}^*) \right) t^{\ell - q_j}  + \textrm{ higher order terms in } t, 
\end{eqnarray}
where $\ell := \min_{i \in I_3} (d_i + \theta_i)$ and $I_4 := \{i \in I_3 \ : \ d_i + \theta_i = \ell\} \ne \emptyset.$ 

\begin{Claim} 
We have
\begin{eqnarray}\label{PT10}
\sum_{i \in I_4} \lambda_i^0  \frac{\partial f_{i, \Delta_i}}{\partial x_j}({x}^*)  &=& 0 \quad \textrm{ for all } \quad j = 1, \ldots, n.
\end{eqnarray}
\end{Claim}

\begin{proof}
Indeed, for each $j \not \in J,$ the polynomial $f_{i,\Delta_i}$ does not depend on the variable $x_j$, so $\frac{\partial f_{i, \Delta_i}}{\partial x_j}\equiv 0.$ Consequently, 
\begin{eqnarray*}
\sum_{i \in I_4} \lambda_i^0  \frac{\partial f_{i, \Delta_i}}{\partial x_j}({x}^*)= 0 \quad \textrm{ for all } \quad j \not \in J.
\end{eqnarray*}

If $\mu(t) \equiv 0$ then Condition~(c6) and \eqref{PT9} give
\begin{eqnarray*}
\sum_{i \in I_4} \lambda_i^0  \frac{\partial f_{i, \Delta_i}}{\partial x_j}({x}^*)= 0 \quad \textrm{ for all } \quad j \in J,
\end{eqnarray*}
and there is nothing to prove. So assume that $\mu(t) \not \equiv 0.$ By Lemma~\ref{GrowthDichotomyLemma}, we may write
\begin{eqnarray*}
\mu(t) & = & \mu^0 t^{\delta} + \textrm{higher order terms in } t,
\end{eqnarray*}
where $\mu^0 \ne 0$ and $\delta \in \mathbb{Q}.$ Let $J' := \{j \in J \ : \ \ell - q_j = \delta + q_j\}.$ 
Assume that $J' \ne \emptyset.$ We have from (c6) and \eqref{PT9} that
$\ell - q_j \le \delta + q_j$ for all $j \in J$ and that
\begin{eqnarray*}
\sum_{i \in I_4} \lambda_i^0  \frac{\partial f_{i, \Delta_i}}{\partial x_j}({x}^*) = 
\begin{cases}
-\mu^0 {x}^*_j  & \textrm{ if  } j \in J',\\
0 & \textrm{ otherwise.}
\end{cases}
\end{eqnarray*}
Hence
\begin{eqnarray*}
\sum_{j = 1}^n \left ( \sum_{i \in I_4} \lambda_i^0  \frac{\partial f_{i, \Delta_i}}{\partial x_j}({x}^*) \right) q_j {x}^*_j 
&=& \sum_{j \in J'} \left ( \sum_{i \in I_4} \lambda_i^0  \frac{\partial f_{i, \Delta_i}}{\partial x_j}({x}^*) \right) q_j {x}^*_j \\
&=& \sum_{j \in J'} - q_j \mu^0 ({x}^*_j)^2  \\
&=& - \frac{\ell - \delta}{2} \mu^0 \sum_{j \in J'}({x}^*_j)^2.
\end{eqnarray*}

On the other hand, $f_{i, \Delta_i}$ is a weighted homogeneous polynomial of type $(q, d_i).$ Thus, from the Euler relation~\eqref{PT1} we obtain for all $i \in I_4,$
\begin{eqnarray*}
\sum_{j = 1}^n q_j {x}^*_j \frac{\partial f_{i, \Delta_i}}{\partial x_j}({x}^*)  = d_i f_{_i, \Delta_i}({x}^*) \ = \ 0,
\end{eqnarray*}
where the last equality follows from \eqref{PT7} and \eqref{PT8}. But $\ell - \delta \ne 0$ because $\ell - q_j \le \delta + q_j$ for all $j \in J$ and $\min_{j \in J} q_j < 0.$ Hence, we obtain the absurd equality
\begin{eqnarray*}
0 
&=& \sum_{i \in I_4} \left ( \sum_{j = 1}^n q_j {x}^*_j \frac{\partial f_{i, \Delta_i}}{\partial x_j}({x}^*) \right) \lambda^0_i 
\ = \ \sum_{j = 1}^n  \left ( \sum_{i \in I_4}  \lambda^0_i \frac{\partial f_{i, \Delta_i}}{\partial x_j}({x}^*) \right) q_j {x}^*_j  \\
&=& - \frac{\ell - \delta}{2} \mu^0 \sum_{j \in J'}({x}^*_j)^2 \ \ne \ 0.
\end{eqnarray*}
Therefore, $J' = \emptyset.$ The claim is proved.
\end{proof}

\noindent
{\em Proof of Theorem~\ref{Fritz-JohnTypeTheorem} (continued).} 
Let
$\lambda^* := (\lambda^*_0, \lambda^*_1, \ldots, \lambda^*_p) \in \mathbb{R}^{p + 1},$ 
where
$$\lambda^*_i := 
\begin{cases}
\lambda^0_i & \textrm{ if } i \in I_4, \\
0 & \textrm{ otherwise.}
\end{cases}$$
Then the real numbers $\lambda^*_i, i = 0, 1, \ldots, p,$ are nonnegative and not all zero. This proves the statements~(i), (iii) and (v) when combined with~\eqref{PT10}.

Take any $i \in \{1, \ldots, p\} \setminus I_1.$ The restriction of $f_i$ on $\mathbb{R}^J$ is constant. Combining this with~\eqref{PT2}, we get
$$f_{i, \Delta_i}(x^*) = f_i(x^*) = f_i(x^\ell) \le 0.$$ 
Hence, the statement (iv) follows from \eqref{PT8}.

We next show that $d_0 = 0.$ If it is not the case, then the $\mathrm{(MF)}_\infty$ property shows the existence of a vector $v \in \mathbb{R}^n$ satisfying
\begin{eqnarray*}
\langle \nabla  f_{i, \Delta_i} (x^*), v \rangle  <  0 \quad \textrm{ for all \ $i \in I_1 \cup \{0\}$ \ with } \ f_{i, \Delta_i} (x^*)  = 0.
\end{eqnarray*}
(Note that $I_1 \cup \{0\}$ is the set of indices $i \ge 0$ for which the restriction of $f_i$ on ${\mathbb{R}^J}$ is not constant.)
These inequalities, together with the proved statements (iii)-(v), give a contradiction:
\begin{eqnarray*}
0 
&=& \lambda^*_0 \langle \nabla f_{0, \Delta_0}(x^*) , v \rangle  + \sum_{i = 1}^p \lambda^*_i \langle \nabla f_{i, \Delta_i}(x^*), v \rangle \\
&=& \sum_{i \in I_4} \lambda^*_i \langle \nabla f_{i, \Delta_i}(x^*), v \rangle \ < \ 0.
\end{eqnarray*}
Therefore, $d(q, \Gamma(f_0)) = d_0 = 0.$ Furthermore, combining \eqref{PT6} with Condition~(c2), we deduce that $f^* = f_{0, \Delta_0}(x^*) ,$ and the statement~(ii) follows.

Finally, let $v \in \mathbb{R}^n$ be a vector such that 
$$\langle \nabla f_{i, \Delta_i}(x^*) , v \rangle < 0 \quad \textrm{ for all \ $i \in I_1$ \ with } \quad  f_{i, \Delta_i}(x^*) = 0.$$
Since the multipliers $\lambda^*_i$ can be normalized by multiplication with a positive scalar, it is sufficient to show that $\lambda^*_0 > 0.$

To the contrary, assume that $\lambda^*_0 = 0,$ so that
\begin{eqnarray*}
\sum_{i = 1}^p \lambda^*_i \nabla f_{i, \Delta_i}(x^*)  &=& 0.
\end{eqnarray*}
This leads to an absurd situation
\begin{eqnarray*}
0 &=& \sum_{i = 1}^p \lambda^*_i \langle \nabla f_{i, \Delta_i}(x^*) , v \rangle 
\ = \ \sum_{i \in I_4, \ i \ge 1} \lambda^*_i \langle \nabla f_{i, \Delta_i}(x^*) , v \rangle  \ < \ 0.
\end{eqnarray*}
Therefore, we must have $\lambda^*_0 > 0,$ which completes the proof.
\end{proof}

\begin{remark}{\rm
(i) We do not know whether $x^*$ is a minimizer or not of the polynomial optimization problem 
\begin{equation*}
\inf f_{0, \Delta_0}(x) \quad \textrm{ subject to } \quad f_{i, \Delta_i}(x) \le 0 \ \textrm{ for } \ i = 1, \ldots, p.
\end{equation*}

(ii) Since the restriction of $f_0$ on $S$ does not attain its infimum $f^*,$ Condition~(c2) shows that the function $(0, \epsilon) \rightarrow \mathbb{R}, t \mapsto (f_0 \circ \varphi)(t),$ is strictly decreasing
(after perhaps shrinking $\epsilon).$ Now, by Lemma~\ref{GrowthDichotomyLemma}, we may write
\begin{eqnarray*}
f_0(\varphi(t)) & = & f^* + c t^{\alpha} + \textrm{higher order terms in } t,
\end{eqnarray*}
where $c > 0$ and $\alpha > 0.$ 

On the other hand,  we deduce from \eqref{PT3} and Condition~(c6) that
\begin{eqnarray*}
\left | \lambda_0(t)  \frac{d}{dt}(f_0 \circ \varphi)(t)\right | 
&=& 
\frac{\left \| \displaystyle \sum_{i \in I_1 \cup \{0\}}  \lambda_i(t) \nabla f_i(\varphi(t)) + 
\sum_{k \not \in J}  \nu_k(t) \nabla h_k(\varphi(t)) \right  \|}{2\| \varphi(t)\|} \left | \frac{d
\|\varphi(t)\|^2}{d t} \right |.
\end{eqnarray*}
Note by (c9) that $| \lambda_0(t) | \le 1$ for all $t \in (0, \epsilon).$ Then a simple calculation shows that
\begin{eqnarray*}
\|\varphi(t)\| \left\|\sum_{i \in I_1 \cup \{0\}}  \lambda_i(t) \nabla f_i(\varphi(t)) + \sum_{k \not \in J}  \nu_k(t) \nabla h_k(\varphi(t)) \right\| &=& c' t^{\alpha}   + \textrm{higher order terms in } t,
\end{eqnarray*}
for some constant $c' \ge 0.$ Since $\alpha > 0,$ we obtain
\begin{eqnarray*}
\lim_{t \to 0^+} \|\varphi(t)\| \left  \|\sum_{i \in I_1 \cup \{0\}}  \lambda_i(t) \nabla f_i(\varphi(t)) + 
\sum_{k \not \in J}  \nu_k(t) \nabla h_k(\varphi(t)) \right\| &=& 0.
\end{eqnarray*}
Since the curve $\varphi(t)$ lies in the constraint set $S,$ it follows from (c1), (c2) and the above equation that  the restriction of $f_0$ on $S$ does not satisfy the so-called {\em (weak) Palais--Smale condition}\footnote{Given a differentiable function $f \colon \mathbb{R}^n \rightarrow \mathbb{R}$ and a value $y \in \mathbb{R},$ we say that $f$  satisfies the {\em weak Palais--Smale condition} at  $y$ if any sequence $\{x^\ell\} \subset \mathbb{R}^n$ such that $f(x^\ell) \rightarrow y$ and $\|x^\ell\| \|\nabla f(x^\ell)\| \rightarrow 0$  as $\ell \to \infty$ contains a convergent subsequence (whose limit is then a critical point with critical value $y$).} at the optimal value $f^*.$ We refer the reader to the survey of Mawhin and Willem \cite{Mawhin2010} for more details about the history and genesis of the Palais--Smaile condition.
}\end{remark}

\begin{remark}{\rm
It is worth mentioning that, very recently, relying on results from real algebraic geometry, Lasserre \cite{Lasserre2015} derived {\em global optimality conditions} for polynomial optimization which generalize the local optimality conditions due to Fritz-John and Karush--Kuhn--Tucker for nonlinear optimization. 
}\end{remark}

We now study the existence of optimal solutions to the optimization problem~\eqref{PT0}. Very recently, it was proved in \cite{Dinh2014-2} that 
\eqref{PT0} has an optimal solution provided the following conditions hold:
\begin{enumerate}
\item [(i)]   all the polynomial functions $f_0, f_1, \ldots, f_p$ are convenient, and 
\item [(ii)]  the polynomial map $(f_0, f_{1}, \ldots, f_{p}) \colon {\Bbb R}^n \rightarrow {\Bbb R}^{p + 1}$ satisfies the so-called condition of non-degeneracy at infinity. (See also Definition~\ref{Definition1}.)
\end{enumerate}
We would like to mention that Theorem~\ref{Fritz-JohnTypeTheorem} (and hence Theorem~\ref{Frank-WolfeTypeTheorem} below) still holds if the (MF)$_{\infty}$ property is replaced by the condition of non-degeneracy at infinity; we leave it the reader to verify these facts. Moreover, as a first application of Theorem~\ref{Fritz-JohnTypeTheorem}, we obtain the following  result which improves Theorem~1.1 in \cite{Dinh2014-2}.

\begin{theorem}[{A Frank--Wolfe type theorem}] \label{Frank-WolfeTypeTheorem}
Let the $\mathrm{(MF)}_\infty$ property hold for the problem $\inf_{x \in S} f_0(x).$ In addition, if the polynomial $f_0$ is convenient, then the problem has at least an optimal solution.
\end{theorem}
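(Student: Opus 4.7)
The plan is to argue by contradiction and reduce everything to Theorem~\ref{Fritz-JohnTypeTheorem}. Suppose $f_0$ does not attain its infimum $f^*$ on $S$. To neutralise the degenerate alternative ``$f^* = 0$'' appearing in the conclusion of Theorem~\ref{Fritz-JohnTypeTheorem}, the first move is to replace $f_0$ by $\widetilde{f}_0 := f_0 - f^* - 1$, so that the new infimum is $-1$. Since only the constant term of $f_0$ changes and the Newton polyhedron at infinity by convention excludes the origin, one has $\Gamma(\widetilde{f}_0) = \Gamma(f_0)$; in particular $\widetilde{f}_0$ remains convenient. The polynomials $f_{i, \Delta_i}$ entering the $(\mathrm{MF})_\infty$ condition arise from faces with $d(q, \Gamma(\cdot)) < 0$, and therefore never involve the constant term, so $(\mathrm{MF})_\infty$ continues to hold for the shifted problem. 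Of course $\widetilde{f}_0$ still fails to attain its infimum on $S$.

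I then apply Theorem~\ref{Fritz-JohnTypeTheorem} to the problem $\inf_{x \in S} \widetilde{f}_0(x)$. Because the new infimum is $-1 \neq 0$, the first alternative of that theorem is ruled out, so I obtain a nonempty set $J \subset \{1, \ldots, n\}$, a vector $q \in \mathbb{R}^n$ with $\min_{j \in J} q_j < 0$, a point $x^* \in \mathbb{R}^n$, and multipliers satisfying conditions~(i)--(v). The single piece of information I actually need is the assertion $d(q, \Gamma(\widetilde{f}_0)) = 0$ contained in (ii), which, together with $\Gamma(\widetilde{f}_0) = \Gamma(f_0)$, translates into
$$\langle q, \kappa \rangle \ \geq \ 0 \qquad \text{for every } \kappa \in \Gamma(f_0).$$

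The contradiction now comes directly from convenience. For each $j \in \{1, \ldots, n\}$, convenience supplies a positive integer $\kappa_j$ with $\kappa_j e^j \in \Gamma(f_0)$, and specialising the displayed inequality to this lattice point gives $q_j \kappa_j \geq 0$, i.e., $q_j \geq 0$. Hence $q_j \geq 0$ for every $j$, contradicting $\min_{j \in J} q_j < 0$ since $J$ is nonempty. The main (really, only) step requiring genuine bookkeeping is the opening reduction: one must verify cleanly that the constant shift $f_0 \mapsto \widetilde{f}_0$ preserves convenience, preserves $(\mathrm{MF})_\infty$, and does not alter the polyhedral data returned by Theorem~\ref{Fritz-JohnTypeTheorem}. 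Once that is in place, the geometric punchline---that $d(q,\Gamma(f_0)) = 0$ combined with convenience forces all $q_j$ to be nonnegative---closes the argument immediately.
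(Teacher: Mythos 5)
Your argument is correct, and its geometric punchline is exactly the paper's: convenience puts a point $\alpha e^{j}$ with $\alpha>0$ on every coordinate axis of $\Gamma(f_0)$, so $d(q,\Gamma(f_0))\le q_{j^*}\alpha<0$ for $j^*\in J$ with $q_{j^*}<0$, contradicting $d(q,\Gamma(f_0))\ge 0$. Where you differ is in how you arrive at $d(q,\Gamma(f_0))\ge 0$ together with $\min_{j\in J}q_j<0$. The paper does not use Theorem~\ref{Fritz-JohnTypeTheorem} as a black box: it re-enters that theorem's proof (``keeping the notations''), where the data $J$ and $q$ with $\min_{j\in J}q_j<0$ are available even in the branch $d_0>0$ that produces the alternative $f^*=0$; since $d_0\ge0$ in every branch, the contradiction is immediate. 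You instead neutralize the $f^*=0$ alternative by the constant shift $\widetilde f_0:=f_0-f^*-1$, which lets you quote only the \emph{statement} of Theorem~\ref{Fritz-JohnTypeTheorem}. That is a legitimate and arguably more modular route, but it requires the invariance bookkeeping you flag, and there is one small inaccuracy in it: under the paper's convention the origin belongs to $\Gamma(f)$ precisely when $f(0)\ne0$ (this is how the footnote is used in the proof of Theorem~\ref{FermatTypeTheorem}), so $\Gamma(\widetilde f_0)=\Gamma(f_0)$ can fail --- the two polyhedra may differ by whether $0$ is a vertex, e.g.\ when $f_0$ has zero constant term and $f^*\ne-1$. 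This does not damage the proof: the origin contributes the value $0$ to every linear functional, so $d(q,\cdot)$ and the minimizing face (and hence the face polynomials entering $\mathrm{(MF)}_\infty$) are unchanged whenever $d<0$, convenience is preserved, and $d(q,\Gamma(\widetilde f_0))=0$ still yields $\langle q,\kappa\rangle\ge0$ for all $\kappa\in\Gamma(f_0)$. You should simply replace the claimed equality of polyhedra by the statement that they differ at most by the origin and that this is harmless. In short: same key step, different packaging --- the paper's version is two lines but leans on the internals of the earlier proof, while yours is self-contained relative to the stated theorem at the price of the shift lemma.
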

\begin{proof} 
Suppose, the assertion of the theorem is false. Keeping the notations as in the proof of Theorem~\ref{Fritz-JohnTypeTheorem}. 
We have $\min_{j \in J} q_j < 0$ and $d_0 := d(q, \Gamma(f_0)) \ge 0.$ Let $j^* \in J$ be such that $q_{j^*} = \min_{j \in J} q_j.$ 
Since $f_0$ is convenient, there exists some $\alpha > 0$ such that $(0, \ldots, 0, \stackrel{\ j^*}{\hat{\alpha}}, 0, \ldots, 0) \in \Gamma(f_0).$
Therefore, 
\begin{eqnarray*}
0 \ \le \ d(q, \Gamma(f_0))  &=& \min \left \{\sum_{j = 1}^n q_j \kappa_j \ : \ \kappa \in \Gamma(f_0) \right \} \ \le \ q_{j^*} \alpha \ < \ 0,
\end{eqnarray*}
which is impossible. The theorem is proved.
\end{proof}

\begin{example}{\rm
Let $f_0(x, y, z) := x^2 + y^2 + z$ and $S := \{(x, y, z)  \in \mathbb{R}^3 \ : \ f_1(x, y, z)  := - z \le 0\}.$ It is easy to check that the problem $\inf_{x \in S} f_0(x)$ is bounded below and has the $\mathrm{(MF)}_\infty$ property. Since $f_0$ is convenient, 
it follows from Theorem~\ref{Frank-WolfeTypeTheorem} that the polynomial $f_0$ attains its infimum on $S;$ namely, we can see that $(0, 0, 0) \in S$ and $f^* = f(0, 0, 0).$ Notice that 
\cite[Theorem~1.1]{Dinh2014-2} cannot be applied for this example because $f_1$ is not convenient.
}\end{example}

\section{The unconstrained case} \label{UnconstrainedCase}

In the rest of this paper we assume that $S = \mathbb{R}^n.$ Then \eqref{PT0} is a unconstrained optimization problem:
$$f^* := \inf_{x \in \mathbb{R}^n} f_0(x).$$

\begin{theorem}[Fermat's theorem]  \label{FermatTypeTheorem}
Assume that the polynomial $f_0$ is bounded from below and non-degenerate at infinity. If $f_0$ does not attain its infimum $f^*,$ then there exist a point $x^* \in (\mathbb{R} \setminus \{0\})^n$ and a bad face $\Delta_0$ of $\Gamma(f_0)$ such that
$$f^* = f_{0, \Delta_0}(x^*) \quad \textrm{ and } \quad \nabla f_{0, \Delta_0}(x^*)   = 0.$$
\end{theorem}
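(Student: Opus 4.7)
My plan is to apply Theorem~\ref{Fritz-JohnTypeTheorem} to the unconstrained problem and then show that the resulting face is bad. By Lemma~\ref{Lemma21}, nondegeneracy at infinity of $f_0$ is equivalent to the $\mathrm{(MF)}_\infty$ property, so Theorem~\ref{Fritz-JohnTypeTheorem} is applicable. Before invoking its conclusion, I need to rule out the exceptional $f^*=0$ alternative. Following the proof of Theorem~\ref{Fritz-JohnTypeTheorem}, one builds a curve $\varphi$ and a vector $q$ with $d_0 := d(q, \Gamma(f_0))$ governing the leading order~\eqref{PT6}. If $d_0 > 0$, then $0 \notin \Gamma(f_0)$ forces $f_0(0)=0$, and~\eqref{PT6} yields $f_0(\varphi(t)) \to 0 = f_0(0)$, so $f^*$ would be attained at the origin, a contradiction. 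If $d_0 < 0$, then $\Delta_0 := \Delta(q, \Gamma(f_0)) \in \Gamma_{\infty}(f_0)$; the Claim in the proof of Theorem~\ref{Fritz-JohnTypeTheorem} gives $\nabla f_{0,\Delta_0}(x^*) = 0$, and extending $x^*$ to $\widetilde{x} \in (\mathbb{R}\setminus\{0\})^n$ preserves this equation (since $\Delta_0 \subset \mathbb{R}^J$ and $f_{0,\Delta_0}$ is independent of $x_j$ for $j \notin J$), contradicting nondegeneracy. Thus $d_0 = 0$ and the full conclusion of Theorem~\ref{Fritz-JohnTypeTheorem} applies.

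This supplies a nonempty $J \subset \{1,\ldots,n\}$, a vector $q$ with $\min_{j\in J} q_j < 0$ and (by the construction) $q_j = M \gg 0$ for $j \notin J$, a point $x^*$ with $x^*_j = 0 \iff j \notin J$, and a face $\Delta_0 = \Delta(q, \Gamma(f_0)) \subset \mathbb{R}^J$ satisfying $f^* = f_{0,\Delta_0}(x^*)$ and, from (iii) specialized to $p=0$, $\nabla f_{0,\Delta_0}(x^*) = 0$. I extend $x^*$ to $\widetilde{x} \in (\mathbb{R}\setminus\{0\})^n$ by assigning arbitrary nonzero values to $\widetilde{x}_j$ for $j \notin J$; since $f_{0,\Delta_0}$ does not depend on these coordinates, the equalities $f_{0,\Delta_0}(\widetilde{x}) = f^*$ and $\nabla f_{0,\Delta_0}(\widetilde{x}) = 0$ persist.

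It remains to verify that $\Delta_0$ is a bad face. Condition~(ii$_b$) is immediate: setting $H := \{\kappa : \langle q, \kappa \rangle = 0\}$, the equality $d(q, \Gamma(f_0)) = 0$ gives $H \cap \Gamma(f_0) = \Delta_0$, and $H$ passes through the origin. For~(ii$_a$), the vector $q$ has mixed strict signs because $q_j = M > 0$ for $j \notin J$ while some $q_j < 0$ for $j \in J$, at least whenever $J \ne \{1,\ldots,n\}$; the edge case $J = \{1,\ldots,n\}$ with all $q_j \le 0$ would force $\Gamma(f_0)$ into a coordinate subspace, and a preliminary reduction to the case where $f_0$ genuinely depends on every variable excludes this. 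For condition~(i), if $f_0(0) \ne 0$ then $0 \in \Gamma(f_0) \cap H = \Delta_0$ and the affine span contains the origin trivially; the residual subcase $f_0(0) = 0$ I would handle using the Euler relation~\eqref{PT1} for the degree-zero weighted homogeneous polynomial $f_{0,\Delta_0}$ together with $\nabla f_{0,\Delta_0}(\widetilde{x}) = 0$, extracting a nontrivial linear dependence among the exponents of $\Delta_0$ that places the origin in their affine span. The hardest step is precisely this affine-span condition in the $f_0(0) = 0$ subcase.
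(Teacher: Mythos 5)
Your overall strategy coincides with the paper's (invoke Lemma~\ref{Lemma21}, run the machinery of Theorem~\ref{Fritz-JohnTypeTheorem}, rule out $d_0>0$, extend $x^*$ to $(\mathbb{R}\setminus\{0\})^n$, then check badness), and the steps up to and including condition~(ii$_b$) are fine. But there is a genuine gap at condition~(i), the affine-span condition, precisely at the point you flag as ``the hardest step.'' Your proposed route --- the Euler relation~\eqref{PT1} for the weight-zero homogeneous polynomial $f_{0,\Delta_0}$ combined with $\nabla f_{0,\Delta_0}(\widetilde{x})=0$ --- cannot work: for $d=0$ the Euler relation reads $\sum_j q_j x_j \partial f_{0,\Delta_0}/\partial x_j \equiv 0$, which is just a restatement of $\Delta_0 \subset H$ and carries no information about the affine span of $\Delta_0$ itself (which may be a proper subset of $H$ missing the origin). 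Indeed, without a further hypothesis the conclusion is simply false: $f_{0,\Delta}=x_1^2(x_2-1)^2$ has support $\{(2,0),(2,1),(2,2)\}$, whose affine span is the line $\kappa_1=2$ avoiding the origin, yet $\nabla f_{0,\Delta}(1,1)=0$ with $(1,1)\in(\mathbb{R}\setminus\{0\})^2$. The paper's argument for condition~(i) is of a different nature and uses the non-degeneracy hypothesis: if the affine span of $\Delta_0$ did \emph{not} contain the origin, one could perturb $q$ by a linear functional that is constant and nonzero on that span to realize $\Delta_0=\Delta(q',\Gamma(f_0))$ with $d(q',\Gamma(f_0))<0$, so $\Delta_0\in\Gamma_\infty(f_0)$, and then $\nabla f_{0,\Delta_0}(x^*)=0$ with $x^*\in(\mathbb{R}\setminus\{0\})^n$ contradicts non-degeneracy at infinity. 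You already use exactly this contradiction to dispose of the case $d_0<0$, so the missing ingredient is only the perturbation fact that a face whose affine span misses the origin lies in $\Gamma_\infty(f_0)$.

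A secondary, lesser issue concerns condition~(ii$_a$). Your case split is correct when $J\neq\{1,\ldots,n\}$, but the edge case $J=\{1,\ldots,n\}$ with all $q_j\le 0$ is dispatched by an unverified ``preliminary reduction'' to polynomials depending on every variable; making that rigorous requires checking that non-degeneracy at infinity, $\Gamma_\infty$, and the notion of bad face are all preserved when a dummy variable is dropped and then lifted back (plausible, but not free, since condition~(ii) quantifies over hyperplanes of the ambient space). The paper avoids any reduction by proving $\max_{j\in J}q_j>0$ directly: if $\max_{j\in J}q_j\le 0$, then $d_0=0$ forces $\kappa_{j^*}=0$ on $\Gamma(f_0)\cap\mathbb{R}^J$ for the index $j^*$ with $q_{j^*}<0$, whence $\partial f_0/\partial x_{j^*}(\varphi(t))\equiv 0$, and the $j^*$-component of the Lagrange identity~\eqref{PT11} along the curve then forces $\mu\equiv 0$, contradicting \eqref{PT3} and the strict monotonicity of $f_0\circ\varphi$. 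This curve-based argument is information you discard by working only with the limiting data $(J,q,x^*)$.
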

\begin{proof}
Since $f_0$ is non-degenerate at infinity, it follows from Lemma~\ref{Lemma21} that the problem $\inf_{x \in \mathbb{R}^n} f_0(x)$ has the $\mathrm{(MF)}_\infty$ property.
Keeping the notations as in the proof of Theorem~\ref{Fritz-JohnTypeTheorem}. 
There exist a nonempty set $J \subset \{1, \ldots, n\},$ a vector $q \in \mathbb{R}^n$ with $\min_{j \in J} q_j < 0,$ and a point $x^* \in \mathbb{R}^n$ such that the conditions~(c1)-(c9) hold. 

Observe that, if $d_0 := d(q, \Gamma(f_0)) > 0$ then $f^* = 0$ and $0 \not \in \Gamma(f_0);$ hence $f(0) = 0 = f^*$ which contradicts our assumption.
By Theorem~\ref{Fritz-JohnTypeTheorem}, therefore
\begin{enumerate}
\item [(a)]   $x^*_j = 0$ if, and only if, $j \not \in J;$
\item [(b)]   $\Delta_0 := \Delta(q, \Gamma(f_0)) \subset \mathbb{R}^J$ and $d_0 = 0;$
\item [(c)]   $f^* = f_{0, \Delta_0}(x^*)$ and $\nabla f_{0, \Delta_0}(x^*) = 0.$
\end{enumerate}
(The assumption $S = \mathbb{R}^n$ yields that $I_1 = \emptyset$ and hence, by \eqref{PT4}, that $\lambda^*_0 = 1).$ Furthermore, Conditions~(c6) reads
\begin{eqnarray}\label{PT11}
\nabla f_0(\varphi(t)) +  \sum_{k \not \in J}  \nu_k(t) \nabla h_k(\varphi(t)) + \mu(t) \varphi(t) \equiv 0.
\end{eqnarray}
On the other hand, since $\Delta_0 \subset \mathbb{R}^J,$ the polynomial $f_{0, \Delta_0}$ does not depend on $x_j$ for all $j \not \in J.$ Hence, by re-assigning $x^*_j := 1$ for all $j \not \in J,$ we obtain the new point $x^* \in (\mathbb{R} \setminus \{0\})^n$ for which the property~(c) still holds. 

We next prove that $\max_{j \in J} q_j > 0.$ If it is not the case, then we have
\begin{eqnarray*}
\sum_{j = 1}^n q_j \kappa_j  \ = \ \sum_{j \in J} q_j \kappa_j &\le& 0 \quad \textrm{ for all } \quad \kappa:= (\kappa_1, \ldots, \kappa_n)  \in \Gamma(f_0) \cap \mathbb{R}^J.
\end{eqnarray*}
Since $d_0 = 0$ is the smallest value of the linear function $\sum_{j = 1}^n q_j \kappa_j$ on $\Gamma(f_0) \cap \mathbb{R}^J$ (see the equation~\eqref{PT5}), this follows that 
\begin{eqnarray*}
\kappa_{j^*} & = & 0 \quad \textrm{ for all } \quad \kappa \in \Gamma(f_0) \cap \mathbb{R}^J,
\end{eqnarray*}
where $j^* \in J$ is such that $q_{j^*} = \min_{j \in J} q_j < 0.$ Consequently, the restriction of $f_0$ on $\mathbb{R}^J$ does not depend on the variable $x_{j^*},$ and so
$\frac{\partial f_0}{\partial x_{j^*}}(\varphi(t)) \equiv 0,$ in contradiction to \eqref{PT11} because we know that $\frac{\partial h_k}{\partial x_{j^*}}(\varphi(t)) \equiv 0$ for all $k \not \in J$ 
 and $\varphi_{j^*}(t) \not \equiv 0.$

In summary, we have shown that $d(q, \Gamma(f_0)) = 0$ and $\min_{j \in J} q_j < 0 < \max_{j \in J} q_j.$ Let
$$H := \{x \in \mathbb{R}^n \ : \ \sum_{j = 1}^n q_j \kappa_j  = 0 \}.$$
Then the equality $H \cap \Gamma(f_0) = \Delta_0$ follows immediately from definitions. On the other hand, if the affine subvariety of dimension = $\dim (\Delta_0)$ spanned by $\Delta_0$ does not contain the origin, then $\Delta_0 \in \Gamma_\infty(f_0),$ 
which, together with Condition~(c) above, leads to a contradiction with the nondegenracy condition of $f_0.$ 
Hence the conditions (i) and (ii) in the definition of a bad face are fulfilled. The theorem is proved.
\end{proof} 

Let $K_0(f_0)$ be the set of critical values of $f_0$, i.e., 
\begin{eqnarray*}
K_0(f_0) &:=& \{f_0(x) \ : \ x \in \mathbb{R}^n \textrm{ and } \nabla f_{0}(x) = 0\};
\end{eqnarray*}
we also put 
\begin{eqnarray*}
\Sigma_\infty(f_0) &:=& \bigcup_{\Delta} \{f_{0, \Delta}(x) \ : \ x \in \mathbb{R}^n \textrm{ and } \nabla f_{0, \Delta}(x) = 0\},\\
\Sigma_\infty'(f_0) &:=& \bigcup_{\Delta} \{f_{0, \Delta}(x) \ : \ x \in (\mathbb{R} \setminus \{0\})^n \textrm{ and } \nabla f_{0, \Delta}(x) = 0\},
\end{eqnarray*}
where the unions are taken over all bad faces $\Delta$ of $\Gamma(f_0).$ Clearly, we have $\Sigma_\infty'(f_0) \subset \Sigma_\infty(f_0).$ Furthermore, by a semi-algebraic version of Sard's theorem (see, for example, \cite{HaHV2017}), the sets $K_0(f_0), \Sigma_\infty(f_0)$ and
$\Sigma_\infty'(f_0)$ are finite.

\begin{theorem} \label{Theorem42}
Assume that the polynomial $f_0$ is bounded from below on $\mathbb{R}^n$ and non-degenerate at infinity. We have
\begin{eqnarray*}
f^* 
&=& \min \{c \ : \ c \in K_0(f_0) \cup \Sigma_\infty(f_0) \} \\
&=& \min \{c \ : \ c \in K_0(f_0) \cup \Sigma_\infty'(f_0) \}.
\end{eqnarray*}
\end{theorem}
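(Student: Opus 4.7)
The plan is to sandwich $f^{*}$ between $\min\{c : c \in K_{0}(f_{0}) \cup \Sigma_{\infty}'(f_{0})\}$ and $\min\{c : c \in K_{0}(f_{0}) \cup \Sigma_{\infty}(f_{0})\}$. Since $\Sigma_{\infty}'(f_{0}) \subseteq \Sigma_{\infty}(f_{0})$ automatically gives $\min\{c : c \in K_{0}(f_{0}) \cup \Sigma_{\infty}(f_{0})\} \le \min\{c : c \in K_{0}(f_{0}) \cup \Sigma_{\infty}'(f_{0})\}$, once I prove the two flanking estimates $\min\{c : c \in K_{0}(f_{0}) \cup \Sigma_{\infty}'(f_{0})\} \le f^{*}$ and $f^{*} \le \min\{c : c \in K_{0}(f_{0}) \cup \Sigma_{\infty}(f_{0})\}$, chaining all three forces equalities throughout, which is exactly the theorem.

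For the first flanking estimate I would exhibit an element of $K_{0}(f_{0}) \cup \Sigma_{\infty}'(f_{0})$ equal to $f^{*}$. If $f_{0}$ attains its infimum at some $x^{*} \in \mathbb{R}^{n}$, then Fermat's rule gives $\nabla f_{0}(x^{*}) = 0$, hence $f^{*} \in K_{0}(f_{0})$. If $f_{0}$ does \emph{not} attain its infimum, then Theorem~\ref{FermatTypeTheorem} directly produces $x^{*} \in (\mathbb{R}\setminus\{0\})^{n}$ and a bad face $\Delta_{0}$ of $\Gamma(f_{0})$ with $f^{*} = f_{0,\Delta_{0}}(x^{*})$ and $\nabla f_{0,\Delta_{0}}(x^{*}) = 0$, whence $f^{*} \in \Sigma_{\infty}'(f_{0})$. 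So in either case $f^{*} \in K_{0}(f_{0}) \cup \Sigma_{\infty}'(f_{0})$, which yields the inequality.

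For the second flanking estimate I would show $c \ge f^{*}$ for every $c \in K_{0}(f_{0}) \cup \Sigma_{\infty}(f_{0})$. The case $c = f_{0}(x) \in K_{0}(f_{0})$ is immediate from $c \ge \inf_{\mathbb{R}^{n}} f_{0} = f^{*}$. The nontrivial case is $c = f_{0,\Delta}(x) \in \Sigma_{\infty}(f_{0})$, where $\Delta$ is a bad face of $\Gamma(f_{0})$ and $\nabla f_{0,\Delta}(x) = 0$. By condition (ii) in the definition of a bad face, I can choose $q \in \mathbb{R}^{n}$ with $q_{i}q_{j} < 0$ for some $i,j$ (so $\min_{j} q_{j} < 0 < \max_{j} q_{j}$) such that $\Delta = \Delta(q, \Gamma(f_{0}))$ and $d(q, \Gamma(f_{0})) = 0$, and Remark~\ref{Remark21}(iii) then makes $f_{0,\Delta}$ weighted homogeneous of type $(q, 0)$. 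For a small parameter $\epsilon > 0$ I would define $\xi \in (\mathbb{R}\setminus\{0\})^{n}$ by $\xi_{j} := x_{j}$ when $x_{j} \ne 0$ and $\xi_{j} := \epsilon$ otherwise, and consider the semi-algebraic curve $\gamma(t) := (t^{q_{1}}\xi_{1}, \ldots, t^{q_{n}}\xi_{n})$ for $t > 0$. Weighted homogeneity gives $f_{0,\Delta}(\gamma(t)) \equiv f_{0,\Delta}(\xi)$, while every exponent $\kappa \in \mathrm{supp}(f_{0})\setminus \Delta$ satisfies $\langle q, \kappa\rangle > d(q,\Gamma(f_{0})) = 0$, so each remaining monomial contributes $a_{\kappa}\xi^{\kappa}t^{\langle q,\kappa\rangle} \to 0$ as $t \to 0^{+}$; summing gives $\lim_{t \to 0^{+}} f_{0}(\gamma(t)) = f_{0,\Delta}(\xi)$. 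Since some $q_{j} < 0$ and all $\xi_{j} \ne 0$, $\|\gamma(t)\| \to \infty$ as $t \to 0^{+}$, so $f_{0,\Delta}(\xi) \ge f^{*}$. Letting $\epsilon \to 0^{+}$ and invoking continuity of $f_{0,\Delta}$ gives $c = f_{0,\Delta}(x) \ge f^{*}$, completing the estimate.

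The main obstacle is the curve construction in the third paragraph. One cannot in general just take $\xi = x$ because some $x_{j}$ may vanish, forcing a perturbation into the real torus $(\mathbb{R}\setminus\{0\})^{n}$ together with a double limit: first $t \to 0^{+}$ (to escape to infinity and kill all monomials outside $\Delta$ using $d(q,\Gamma(f_{0})) = 0$ and the mixed-sign property of $q$), then $\epsilon \to 0^{+}$ (to recover the actual value $f_{0,\Delta}(x)$ by continuity). Once this is in hand the three inequalities close up and both equalities in the theorem follow simultaneously.
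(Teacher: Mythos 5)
Your proposal is correct and follows the same overall strategy as the paper: the same three-inequality sandwich, with the lower bound $\min\{c : c \in K_0(f_0)\cup\Sigma_\infty'(f_0)\}\le f^*$ obtained exactly as in the paper (Fermat's rule in the attained case, Theorem~\ref{FermatTypeTheorem} otherwise), and the upper bound $f^*\le\min\{c : c\in K_0(f_0)\cup\Sigma_\infty(f_0)\}$ obtained by escaping to infinity along a monomial curve $t\mapsto(t^{q_1}\xi_1,\ldots,t^{q_n}\xi_n)$ associated with the supporting vector $q$ of the bad face. The only point where you genuinely diverge is the treatment of the vanishing coordinates of the critical point $x$: the paper restricts to the coordinate subspace $\mathbb{R}^{J'}$ spanned by the nonzero coordinates, which forces a case analysis ($J'=\emptyset$ or $f_0|_{\mathbb{R}^{J'}}$ constant versus not) and the identity $f_{0,\Delta'}(x^*)=f_{0,\Delta}(x^*)$ for $\Delta'=\Delta\cap\mathbb{R}^{J'}$, whereas you perturb the zero coordinates to $\epsilon$, run the curve inside the torus $(\mathbb{R}\setminus\{0\})^n$, and recover $f_{0,\Delta}(x)$ by a second limit $\epsilon\to 0^+$ using continuity. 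Your variant is slightly cleaner in that it avoids the case split entirely and in fact proves the stronger statement that $f_{0,\Delta}(y)\ge f^*$ for \emph{every} $y$ (the criticality of $x$ is never used in that direction); the paper's variant stays within a single limit and within the semialgebraic curve framework it uses elsewhere. Both are valid.
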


\begin{proof} 
We first show that 
\begin{eqnarray}\label{PT12}
f^* &\le& \min \{c \ : \ c \in K_0(f_0) \cup \Sigma_\infty(f_0) \}.
\end{eqnarray}
Indeed, it is clear that 
\begin{eqnarray*}
f^* &\le& \min \{c \ : \ c \in K_0(f_0)\}.
\end{eqnarray*}
Let $\Delta$ be a bad face of $\Gamma(f_0)$ and let $x^* \in \mathbb{R}^n$ be a critical point of $f_{0, \Delta}$ such that
\begin{eqnarray*}
f_{0, \Delta}(x^*) &=& \min \{c \ : \ c \in \Sigma_\infty(f_0) \}.
\end{eqnarray*}
By definition, there exist a nonzero vector $q \in \mathbb{R}^n$ with $\min_{j = 1, \ldots, n} q_j < 0$ such that $\Delta = \Delta(q, \Gamma(f_0))$ and 
$d(q, \Gamma(f_0)) = 0.$ Let $J$ be the smallest subset of $\{1, \ldots, n\}$ such that $\Delta \subset \mathbb{R}^J.$ 
Then  $J \ne \emptyset$ because $\Delta$ is a bad face of $\Gamma(f_0).$ Moreover, the restriction of $f_0$ on $\mathbb{R}^J$ is not constant and the polynomial $f_{0, \Delta}$ does not depend on the variables $x_j$ for $j \not \in J.$ 

Let $J' := \{j \in J \ : \ x^*_j \ne 0\}.$ If $J' = \emptyset$ or $f_0|_{\mathbb{R}^{J'}}$ is constant, then it follows from definitions that
\begin{eqnarray*}
f^* & \le & f_0(0) \ = \ f_{0, \Delta}(0) \ = \ f_{0, \Delta}(x^*)
\end{eqnarray*}
and \eqref{PT12} holds. So assume that $J' \ne \emptyset$ and $f_0|_{\mathbb{R}^{J'}}$ is not constant. Let $\Delta' := \Delta \cap \mathbb{R}^{J'}.$ Then $\Delta'$ is a closed face of 
$\Gamma(f_0|_{\mathbb{R}^{J'}}).$ Furthermore, by definition, we have
\begin{eqnarray*}
f_{0, \Delta'}(x^*) \ = \ f_{0, \Delta}(x^*).
\end{eqnarray*}
Define the monomial curve $\varphi \colon (0, 1) \rightarrow \mathbb{R}^n, t \mapsto (\varphi_1(t), \ldots, \varphi_n(t)),$ by
$$\varphi_j(t) :=
\begin{cases}
x^*_j t^{q_j} & \textrm{ if } j \in J', \\
0 & \textrm{ otherwise.}
\end{cases}$$
A simple calculation shows that
\begin{eqnarray*}
f_0(\varphi(t))  &=& f_{0, \Delta'}(x^*) + \textrm{ higher order terms in } t.
\end{eqnarray*}
Consequently, we obtain
\begin{eqnarray*}
f^* & \le & \lim_{t \to 0^+} f_0(\varphi(t)) \ = \ f_{0, \Delta'}(x^*) \ = \ f_{0, \Delta}(x^*)
\end{eqnarray*}
and \eqref{PT12} is proved.

We now assume that $f_0$ attains its infimum $f^*$ on $\mathbb{R}^n.$ Then 
\begin{eqnarray*}
f^* 
&=& \min \{c \ : \ c \in K_0(f_0)\} \\
&\ge& \min \{c \ : \ c \in K_0(f_0) \cup \Sigma_\infty'(f_0) \} \\
&\ge& \min \{c \ : \ c \in K_0(f_0) \cup \Sigma_\infty(f_0) \}.
\end{eqnarray*}
These, together with \eqref{PT12}, yield the desired relations.

Finally, we assume that $f_0$ does not attain its infimum $f^*$ on $\mathbb{R}^n.$ By Theorem~\ref{FermatTypeTheorem}, we have
\begin{eqnarray*}
f^* 
&\ge& \min \{c \ : \ c \in \Sigma_\infty'(f_0) \} \\
&\ge& \min \{c \ : \ c \in K_0(f_0) \cup \Sigma_\infty'(f_0) \} \\
&\ge& \min \{c \ : \ c \in K_0(f_0) \cup \Sigma_\infty(f_0) \}.
\end{eqnarray*}
Combining this with \eqref{PT12} again we get the desired relations. 
\end{proof}

\begin{example}{\rm
Let $f_0 \colon \mathbb{R}^2 \rightarrow \mathbb{R}$ be the polynomial defined by $f_0(x, y) := (xy - 1)^2 + x^2.$
A simple calculation shows that $(0, 0)$ is the unique critical point of $f_0,$ and so
$$K_0(f_0) = \{f_0(0, 0)\} = \{1\}.$$ 

On the other hand, by definition, the Newton polyhedron $\Gamma(f_0)$ of $f_0$ is the  triangle with the vertices at $O(0, 0), A(2, 2)$ and $B(2, 0).$ It follows that the edge $\Delta := OA$ is the unique bad face of $\Gamma(f_0)$ and that $f_{0, \Delta} = (xy - 1)^2.$  Hence, by definition again, we obtain
\begin{eqnarray*}
\Sigma_\infty(f_0) &:=& \{0, 1\} \quad \textrm{ and } \quad \Sigma_\infty'(f_0) \ := \ \{0\}.
\end{eqnarray*}
Note that, the polynomial $f_0$ is bounded from below and non-degenerate at infinity. Therefore, due to Theorem~\ref{Theorem42}, 
\begin{eqnarray*}
f^* & = &  \min \{c \ : \ c \in K_0(f_0) \cup \Sigma_\infty(f_0) \} \ = \ 0.
\end{eqnarray*}
}\end{example}

\begin{remark}{\rm
(i) In the paper \cite{Nie2006},  Nie, Demmel, and Sturmfels established sum of squares representations of positive polynomials modulo gradient ideals, i.e., the ideals generated by all the partial derivatives. Based on these representations, the authors constructed a sequence of SDP relaxations whose optimal values converge monotonically, increasing to the smallest critical value of a polynomial. Combining this fact with Theorem~\ref{Theorem42}, we can find an appropriate sequence of computationally feasible SDP relaxations, whose optimal values converge to the infimum value $f^* := \inf_{x \in \mathbb{R}^n} f_0(x).$ These facts open up the possibility of solving previously intractable polynomial optimization problems.

(ii) We do not know whether Theorem~\ref{Theorem42} can be extended to the case of optimization problems with constraints. This question will be studied in the future work.
}\end{remark}

\subsection*{Acknowledgments}
The author thanks to Jean Bernard Lasserre for useful discussions. The final version of this paper was completed while the author was visiting LAAS--CNRS in April 2017. He wishes to thank the institute and Jean Bernard Lasserre in particular for the hospitality and financial support from the European Research Council Advanced Grant for the TAMING project, No. 666981.

\end{document}